\newtheorem{theorem}{Theorem}[section]
\newtheorem{definition}[theorem]{Definition}
\newtheorem{lemma}[theorem]{Lemma}
\newtheorem{coro}[theorem]{Corollary}
\newcommand{\Z}{{\mathbb{Z}}}
\newcommand{\N}{{\mathbb{N}}}
\begin{document}

\title{The Minimal Euclidean Function on the Gaussian Integers}
\author{Hester Graves}
\date{\today}

\maketitle

\section{Introduction}

The Euclidean algorithm, an ancient piece of mathematics, generalizes to the 
modern concepts of Euclidean functions and Euclidean domains.  We say a domain, $R$, 
is Euclidean if there exists a (Euclidean) function $f$,
$f: R \setminus 0 \rightarrow \N,$ such that if $a \in R$ and $b \in R \setminus 0$, 
then there exist some $q,r \in R$ such that $a =qb +r$, with either $r=0$ or $f(r) < f(b)$.  We restate the standard definition as follows.

\begin{definition} (\cite{Motzkin})
Given a domain $R$, a function $\phi: R\setminus 0 \rightarrow \N$ is \textbf{Euclidean} if, for any $b \in R\setminus 0$ and any $[a] \in R/b$, then either $b | a$ or there exists some $r \in R\setminus 0$ such that $[a]=[r]$ and $\phi(r) < \phi(b)$.
\end{definition}

In other words, if $b \in R \setminus 0$, then every non-zero coset $[a] \in R/b$ has a representative $r$ such that $\phi(r) < \phi(b)$.  This reformulation of the definition paves the way for Motzkin's Lemma and his construction, defined below.

\begin{definition} \textbf{Motzkin's Construction}  Given a domain $R$, we define 
\begin{align*}
A_{R,0} &: = 0 \cup R^{\times} \\
A_{R,j} &: = A_{R, j-1} \cup \{ \beta :A_{R,j-1} \twoheadrightarrow R/\beta \}, \text{ and}\\ 
A_R & := \bigcup_{j=0}^{\infty} A_{R,j}.
\end{align*}
\end{definition}

We remind ourselves that $A_{R,j} \rightarrow R/ \beta$ if, for every $[a] \in R/\beta$, there exists some $r \in A_{R, j}$ such that $[a] = [r]$. Our set $A_{R, j}$ is the union of $A_{R, j-1}$ and all $\beta$ such that every coset in $R/\beta$ has a representative in $A_{R, j-1}$.  This construction allows us to state Motzkin's Lemma, first published in 1949 (\cite{Motzkin}).

\begin{lemma} (\cite{Motzkin}) \label{Motzkins_Lemma}  A domain $R$ is Euclidean if and only if $R = A_R$.  Furthermore, if $F$ is the set of all Euclidean functions on $R$, and if 
\begin{align*}
\phi_R &: R \setminus 0 \rightarrow \N,\\ 
\phi_R(a) &:= j \text{ if }a \in A_{R,j} \setminus A_{R, j-1},
\end{align*}
then $\phi_R(a) = min_{f\in F} f(a)$ and $\phi_R$ is itself a Euclidean function.
\end{lemma}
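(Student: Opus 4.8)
The plan is to prove the three assertions together by comparing the level sets of an arbitrary Euclidean function with the sets $A_{R,j}$. First I record the elementary monotonicity fact that $A_{R,i} \subseteq A_{R,j}$ whenever $i \le j$, which is immediate from $A_{R,j} = A_{R,j-1} \cup \{\beta : A_{R,j-1} \twoheadrightarrow R/\beta\}$. The heart of the argument is the following claim: if $f$ is any Euclidean function on $R$, then $a \in A_{R,f(a)}$ for every $a \in R \setminus 0$. I would prove this by strong induction on the value $f(a) \in \N$.

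For the base case, take $a$ for which $f(a)$ is the least value assumed by $f$, so that no $b$ satisfies $f(b) < f(a)$. Then the Euclidean property applied to $b = a$ admits no representative $r$ with $f(r) < f(a)$, so every $[c] \in R/a$ must have $a \mid c$; taking $c = 1$ shows $a \in R^{\times}$, whence $a \in A_{R,0} \subseteq A_{R,f(a)}$. For the inductive step, suppose $a$ is not of least $f$-value, so some $b$ has $f(b) < f(a)$ and in particular $f(a) \ge 1$, and assume $b \in A_{R,f(b)}$ for all such $b$. To place $a$ in $A_{R,f(a)}$ it suffices, by the construction, to verify $A_{R,f(a)-1} \twoheadrightarrow R/a$. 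Given $[c] \in R/a$: if $a \mid c$ then $0 \in A_{R,0} \subseteq A_{R,f(a)-1}$ represents $[c]$; otherwise the Euclidean property yields $r \ne 0$ with $[r]=[c]$ and $f(r) < f(a)$, so the inductive hypothesis and monotonicity give $r \in A_{R,f(r)} \subseteq A_{R,f(a)-1}$. Either way $[c]$ has a representative in $A_{R,f(a)-1}$, proving the claim.

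Two consequences follow at once. Since $a \in A_{R,f(a)}$ gives $\phi_R(a) \le f(a)$, we obtain $\phi_R(a) \le \min_{f \in F} f(a)$; and if $R$ is Euclidean, so that some $f \in F$ exists, then every nonzero $a$ lies in $A_R$, giving $R = A_R$. For the reverse implication of the lemma and for $\phi_R \in F$, I would show directly that $\phi_R$ is Euclidean whenever $R = A_R$. Fix $b \ne 0$ with $\phi_R(b) = j$ and a coset $[a]$ with $b \nmid a$. If $j = 0$ then $b \in R^{\times}$ and $R/b$ has no nonzero coset, so the condition holds vacuously; if $j \ge 1$ then $b \in A_{R,j} \setminus A_{R,j-1}$ forces $A_{R,j-1} \twoheadrightarrow R/b$, so $[a]$ has a representative $r \in A_{R,j-1}$, which is nonzero because $[r]=[a]\ne[0]$ and satisfies $\phi_R(r) \le j-1 < \phi_R(b)$. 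Hence $\phi_R$ is a Euclidean function, so $R = A_R$ implies $R$ is Euclidean and $\phi_R \in F$; combined with the inequality above this yields $\phi_R(a) = \min_{f \in F} f(a)$. The main obstacle is the base case of the induction: one must observe that any element on which $f$ attains its minimum value is forced to be a unit, and this observation is precisely what anchors the induction and ensures $\phi_R$ is finite on exactly the set $A_R$.
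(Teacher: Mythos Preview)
The paper does not supply its own proof of this lemma; it is stated with a citation to Motzkin and used as a black box. Your proposal is therefore not being compared against anything in the paper, but it is a correct and self-contained proof of the result, and it is essentially the standard argument one finds in expositions of Motzkin's work (e.g.\ Samuel~\cite{Samuel} or Lenstra~\cite{Lenstra}).

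The argument is sound. The key inductive claim $a \in A_{R,f(a)}$ is the right organizing principle, and your treatment of the base case---that an element on which $f$ attains its minimum must be a unit---is the correct anchor. One small remark: in the inductive step you write ``in particular $f(a) \ge 1$''; this follows because $f$ takes values in $\N$ and $f(a)$ strictly exceeds some $f(b) \ge 0$, which is what you need for $A_{R,f(a)-1}$ to make sense. You might make that inference explicit. Otherwise the proof is clean and could stand in for the omitted argument in the paper.
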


Motzkin's Lemma and construction changed mathematicians' approaches toward Euclidean domains.  Previously, proving a domain was Euclidean was an exercise in trial and error, as people searched for potential Euclidean functions.  Motzkin let us know that if such a function existed, then there is a definition we can supposedly use to compute the minimal such function, $\phi_R$.  In his paper ``The Euclidean Algorithm,'' he went on to show that $\phi_{\mathbb{Z}}(x) = 
\lfloor \log_2|x| \rfloor$, or one less than the number of digits in the binary expansion of $x$.  

Since Gauss showed that $\mathbb{Z}[i]$ is Euclidean, it seems like the natural next step of Motzkin's work would be to show that $\phi_{\mathbb{Z}[i]}$ is one less than the number of digits in the $(1+i)$-ary expansion, as $2 = i(1+i)^2$.  HW Lenstra found an elegant, abstract description of the sets $A_{\mathbb{Z}[i], n}$ in his 1974 work ``Lectures on Euclidean Rings,'' using advanced analytic and algebraic techniques.  M. Fuchs wrote sophisticated code in his thesis to inductively compute $\phi_{\mathbb{Z}[i]}$.  

This paper introduces an alternate, concrete description of the sets $A_{\mathbb{Z}[i],n}$ that allows us to quickly and easily compute the function $\phi_{\mathbb{Z}[i]}$.  We then introduce a new proof using purely elementary techniques. 


\section{Properties of $\mathbb{Z}[i]$}

In many ways, $\mathbb{Z}$ is an easy ring to study.  The ring has only two units, $\pm 1$, and we can distinguish between the two.  They make adding in $\mathbb{Z}$ very easy -- if $u$ and $v$ are elements of $\{ 0, \pm 1\}$, then $u2^n + v2^n$ is an element of $\{0, \pm 2^{n+1}\}$.  This gives us the luxury of a unique binary expansion for every positive integer.  As the minimal Euclidean function on the integers is the highest power of two in said expansion, we can easily see that 
\begin{align*}
A_{\mathbb{Z}, n } &= \{ x \in \mathbb{Z} : \lfloor \log_2 |x| \rfloor < n\}\\
& = \{ x \in \mathbb{Z} : |x| \leq 2^{n+1} -1\}.
\end{align*}

In contrast, the Gaussians have four multiplicative units, the set $\{ \pm 1, \pm i \}$, and it is impossible to distinguish between $i$ and $-i$.  Complex conjugation, the map that sends $i$ to $-i$, is a consequence of this confusion; we denote the complex conjugate of $a+bi$ by $\overline{a+bi} = a -bi$.
This makes adding in $\mathbb{Z}[i]$ tricky--if $u$ and $v$ are in $\{\pm 1, \pm i\}$,$u \neq v$, then 
$u(1+i)^n + v(1+i)^n \in \{w(1+i)^{n+1}, w(1+i)^{n+2}: w \in \Z[i]^{\times} \}$.  We cannot have a unique $(1+i)$-ary expansion when the choice of $(1+i)$ versus $-i(1+i) = 1-i$ is arbitrary.  As an illustration, 
\begin{align*}
4+i &= -(1+i)^4 +i \\
&= i(1+i)^2 + (1+i) +1,
\end{align*}
and there is no obvious method to choose the expansion with the lowest degree of $(1+i)$.  Our first object of study, therefore, are the following sets.

\begin{definition} We define the sets $B_n$ to be the Gaussian integers that can be written with $n+1$ `digits,' i.e. 
$$B_n = \left \{ \sum_{j=0}^n v_j (1+i)^n, v_j \in \{0, \pm 1, \pm i\} \right \}.$$
\end{definition}

The sets $B_n$ have several nice properties.  They are closed under complex conjugation and multiplication by elements of $B_0 = \{0, \pm 1, \pm i\}$.  If $a+bi \in B_n$, then $(1+i)^j (a+bi) \in B_{n+j}$.  Similarly, if $2^j$ divides both $a$ and $b$ for some $a+bi \in B_n$, then 
$\frac{a}{2^j} + \frac{b}{2^j} i \in B_{n-2j}$.  These lead to our first result on the sets $B_n$. 



\begin{lemma}  If $x \in B_{k}$ and $(1+i)^{k+1} |x$, then $x =0$.
\end{lemma}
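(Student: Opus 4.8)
The plan is to prove the statement by induction on $k$, peeling off the lowest-order digit of the $(1+i)$-ary expansion at each stage. Before committing to this, it is worth recording why the obvious size estimate does not close the argument. If $x=\sum_{j=0}^k v_j(1+i)^j\in B_k$, then the triangle inequality gives only $|x|\le\sum_{j=0}^k(\sqrt2)^j=(\sqrt2+1)\bigl((\sqrt2)^{k+1}-1\bigr)$, whereas a nonzero multiple of $(1+i)^{k+1}$ satisfies $|x|\ge(\sqrt2)^{k+1}$. These two bounds are perfectly compatible, so a purely metric argument cannot force $x=0$; one must instead exploit the arithmetic of $\Z[i]$.

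For the base case $k=0$, an element of $B_0=\{0,\pm1,\pm i\}$ that is divisible by $1+i$ has norm divisible by $N(1+i)=2$. Since the three nonzero elements each have norm $1$, only $x=0$ survives, which settles this case.

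For the inductive step, assume the statement for $k-1$ and take $x=\sum_{j=0}^k v_j(1+i)^j\in B_k$ with $(1+i)^{k+1}\mid x$. Write $x=v_0+(1+i)y$, where $y=\sum_{j=0}^{k-1}v_{j+1}(1+i)^j$ lies in $B_{k-1}$. Since $(1+i)^{k+1}\mid x$ and $k+1\ge1$, in particular $(1+i)\mid x$, hence $(1+i)\mid v_0$. But $1+i$ is prime in $\Z[i]$, as its norm is the rational prime $2$, so it cannot divide a unit; because $v_0\in\{0,\pm1,\pm i\}$, this forces $v_0=0$. Then $x=(1+i)y$, and cancelling the nonzero element $1+i$ in the integral domain $\Z[i]$ turns $(1+i)^{k+1}\mid(1+i)y$ into $(1+i)^k\mid y$. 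Applying the inductive hypothesis to $y\in B_{k-1}$ with $(1+i)^{(k-1)+1}\mid y$ yields $y=0$, whence $x=0$.

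The argument is short, and its only delicate point is the first divisibility deduction: one must verify that $1+i$ is prime, so that $(1+i)\mid v_0$ genuinely forces $v_0=0$ rather than merely constraining it, and that the digit set $\{0,\pm1,\pm i\}$ consists of zero together with units. Everything after that is formal cancellation in an integral domain, and I expect no real obstacle beyond stating these two facts cleanly.
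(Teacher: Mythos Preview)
Your proof is correct and follows essentially the same inductive argument as the paper: peel off the constant digit using divisibility by $1+i$, conclude it must be zero since $1+i$ cannot divide a unit, then cancel one factor of $1+i$ and invoke the hypothesis on $B_{k-1}$. Your write-up is in fact a bit more careful than the paper's (you make explicit why $(1+i)\mid v_0$ forces $v_0=0$, and you perform the cancellation cleanly), but the strategy is identical.
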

\begin{proof} Since $B_0 = \{0, \pm 1, \pm i\}$, the claim is clearly true if $k=0$.  We will now prove the claim by induction.  
Let $k \geq 1$ and let us assume that the claim is true for all $j$, $0 \leq j<k$, and 
let $(1+i)^{k+1}q \in B_k$.  By definition, $$(1+i)^{k+1}q = \sum_{j=0}^{k} (1+i)^j w_j,$$ where $w_j \in B_0$.  
Since $(1+i)$ divides both $(1+i)^{k+1}q$ and $\sum_{j=1}^{k} (1+i)^j w_j$, $(1+i)$ must divide $w_0$, and therefore $w_0$ is equal to zero.
Therefore, $$(1+i)^{k+1}q = \sum_{j=0}^{k-1} (1+i)^j w_{j+1},$$
but since the left hand side is divisible by $(1+i)^k$ and the right hand side is an element of $B_{k-1}$, both sides must equal zero.
We conclude that $(1+i)^{k+1}q$ is zero. 
\end{proof} 

Unfortunately, our definition of $B_n$ does not allow us to easily determine whether a given element $a+bi$ is, indeed, a member of $B_n$.  In order to do so, we introduce the sequence below.

\begin{definition}  We define the sequence 
\begin{equation*}
w_k = 
\begin{cases}
2^{n+1} + 2^n & \text{if}\ k=2n\\
2^{n+2} & \text{if}\ k=2n+1.
\end{cases}
\end{equation*}
\end{definition}
Note that if $k \geq 2$, then $w_k = 2 w_{k-1}$.  The sequence has some other useful properties, as well: 
\begin{align*}
2(w_{n+1} - w_n) &\leq w_n, \\
w_n + 2^{\lfloor n/2 \rfloor} &\leq w_{n+1}, \\
w_n - 2^{\lfloor n/2 \rfloor +1} &= w_{n-1}, \text{ and}\\
 w_{n+1} - 3 \cdot 2^{\lfloor n/2 \rfloor } &\leq w_{n-1}.
\end{align*}We can use this sequence to define the following geometric object, which will be helpful in section 5, proving the main result.

\begin{definition}  We define the $n^{th}$ octogon to be 
\begin{equation*}
Oct_n: = \{ x+yi: |x| + |y| \leq w_n -2, |x| + |y| \leq w_{n+1} - 3 \}.
\end{equation*}
\end{definition}

The sequence $w_n$ gives us a criterion to determine whether a given $a+bi$ is an element of $B_n$.

\begin{theorem}\label{B_union}
The set $B_n \setminus 0$ equals the union
\begin{equation*}\label{big_union}
\displaystyle
\bigcup_{j=0}^{\lfloor n/2 \rfloor}
\{ x+iy: 2^j \parallel (x,y); |x|, |y| \leq w_n - 2^{j+1}; |x|+|y| \leq w_{n+1} - 3\cdot 2^j \}.
\end{equation*}
\end{theorem}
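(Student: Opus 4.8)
The plan is to prove the two inclusions separately, after first peeling off the powers of two so that everything reduces to a ``primitive'' case. Throughout I write $N_\infty(x+iy)=\max(|x|,|y|)$ and $N_1(x+iy)=|x|+|y|$, and call $x+iy$ \emph{primitive} if it is nonzero with $x,y$ not both even. The engine of the whole argument is that multiplication by $(1+i)$ interchanges the two norms: from $(1+i)(x+iy)=(x-y)+(x+y)i$ one checks $N_\infty((1+i)z)=N_1(z)$ and $N_1((1+i)z)=2N_\infty(z)$. I would also record at the outset the identity $w_k=2w_{k-2}$ for $k\ge 2$ (equivalently $w_{n+1}=2w_{n-1}$), immediate from the definition, which is exactly what forces the bounds in the theorem to close up.

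The first step is to reduce to primitive elements. Since $2=-i(1+i)^2$, every nonzero $z$ factors uniquely as $z=2^s z'$ with $z'$ primitive, and $2^s z'=(\text{unit})\,(1+i)^{2s}z'$. Using the stated properties that $(1+i)^j$ carries $B_m$ into $B_{m+j}$ and that dividing by $2^j$ carries $B_m$ into $B_{m-2j}$, together with the preceding Lemma (a nonzero element of $B_k$ has $(1+i)$-valuation at most $k$, so $s\le\lfloor n/2\rfloor$), one obtains $B_n\setminus 0=\bigsqcup_{s=0}^{\lfloor n/2\rfloor}2^s P_{n-2s}$, where $P_m$ denotes the primitive elements of $B_m$. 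Dividing the $s$-th set in the theorem's union by $2^s$ and using $w_n=2^s w_{n-2s}$ and $w_{n+1}=2^s w_{n+1-2s}$, that set becomes precisely $2^s Q_{n-2s}$, where $Q_m:=\{\,z\text{ primitive}:N_\infty(z)\le w_m-2,\ N_1(z)\le w_{m+1}-3\,\}$. Hence the theorem is equivalent to the single clean assertion $P_m=Q_m$ for all $m$, which I prove by induction on $m$ with base case $Q_0=\{\pm1,\pm i\}=P_0$.

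The inclusion $P_m\subseteq Q_m$ is the routine direction. Via $B_m=B_0+(1+i)B_{m-1}$ I would show by induction that \emph{every} $z=v+(1+i)\beta\in B_m$ satisfies $N_\infty(z)\le w_m-2$ and $N_1(z)\le w_{m+1}-3$: the norm-swap gives $N_\infty((1+i)\beta)=N_1(\beta)\le w_m-3$ and $N_1((1+i)\beta)=2N_\infty(\beta)\le 2(w_{m-1}-2)=w_{m+1}-4$, and adding a unit $v$ costs at most $1$ in each norm. This is where $w_{m+1}=2w_{m-1}$ enters. Restricting to primitive $z$ gives $P_m\subseteq Q_m$.

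The reverse inclusion $Q_m\subseteq P_m$ is the heart of the proof and the main obstacle. Given primitive $z=x+iy\in Q_m$, I would first use the dihedral symmetry of the octagon---closure of $B_m$, $P_m$, $Q_m$ under the four units and complex conjugation---to assume $0\le y\le x$, so $x\ge 1$. I then choose a digit $v\in\{0,\pm1,\pm i\}$ making $z-v$ have \emph{both} coordinates odd (subtract $0$ if $x,y$ are already both odd, $\pm1$ if $x$ is even, $\pm i$ if $y$ is even), with the sign chosen to decrease the adjusted coordinate. This keeps $\beta:=(z-v)/(1+i)$ primitive, so by induction it suffices to check $\beta\in Q_{m-1}$, that is $N_\infty(\beta)=\tfrac12 N_1(z-v)\le w_{m-1}-2$ and $N_1(\beta)=N_\infty(z-v)\le w_m-3$. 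The difficulty is entirely in the boundary behavior: away from the corners and axes the crude estimates $N_1(z-v)\le N_1(z)-1$ and $N_\infty(z-v)\le N_\infty(z)$ suffice, but when $z$ lies on an edge, on a coordinate axis (e.g. $y=0$, where $\beta=\tfrac{x-1}{2}-\tfrac{x+1}{2}i$ must be computed by hand), or at a corner, the triangle inequality loses too much and I would instead compute $\beta$ explicitly and invoke the exact identities listed before the theorem, such as $w_n-2^{\lfloor n/2\rfloor+1}=w_{n-1}$ and $w_{n+1}-3\cdot 2^{\lfloor n/2\rfloor}\le w_{n-1}$. Two parity remarks absorb the recurring off-by-one slack: in the inductive step $w_m$ and $w_{m+1}$ are even, while for both-odd $z$ the quantity $N_1(z)$ is even and $N_\infty(z)$ is odd, so each inequality against $w_{m+1}-3$ and $w_m-2$ improves by one. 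Assembling the two inclusions with the $2^s$-stratification then yields the theorem.
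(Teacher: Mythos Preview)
Your proposal is correct, and it is a genuinely different argument from the paper's. The paper runs the induction from $n-1$ to $n$ by stripping off the \emph{top} digit: for $n=2k$ it subtracts $u\cdot 2^k$ and for $n=2k+1$ it subtracts $u\cdot 2^k(1+i)$, then pushes the coordinate inequalities through explicitly, treating the even and odd parities of $n$ as separate cases throughout. Your argument instead first peels off the $2$-adic content (the stratification $B_n\setminus 0=\bigsqcup_s 2^s P_{n-2s}$), reducing everything to the primitive statement $P_m=Q_m$, and then runs the induction from $m-1$ to $m$ by stripping off the \emph{bottom} digit and dividing by $1+i$. The norm-swap identities $N_\infty((1+i)z)=N_1(z)$ and $N_1((1+i)z)=2N_\infty(z)$ are the conceptual gain here: they make the octagon shape transparent (an $\ell^\infty$-ball intersected with an $\ell^1$-ball, interchanged by multiplication by $1+i$) and let you handle all $m$ uniformly without an even/odd split. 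The paper's approach is more hands-on but avoids the preliminary stratification; yours is cleaner once that reduction is in place. One small remark: your boundary analysis is actually less delicate than you fear. In the fundamental sector $0\le y\le x$ the four parity cases ($x,y$ both odd; $x$ even; $y$ even and positive; $y=0$) all go through with the single extra ingredient $w_{m+1}-w_m\ge 2$ for $m\ge 1$, together with the parity observations you already list; no corner-by-corner computation is really needed.
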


\begin{proof}
We will prove this by induction on $n$.  We will first show that this holds when $n$ is even, and then we will show it for when $n$ is odd.
Note that 
\begin{align*}
B_0 \setminus 0 &= \{ \pm 1, \pm i \} \\
&=\{ x+iy : 1 \parallel (x,y); |x|,|y| \leq w_0-2=1; |x|+|y| \leq w_1 -3 = 1 \} \\
\intertext{and}
B_1 \setminus 0 
&= \{ \pm 1, \pm i, \pm 1 \pm i, \pm 2 \pm i, \pm 1 \pm 2i \} \\
&= \{ x+iy : 1 \parallel (x,y); |x|,|y| \leq w_1-2=2; |x|+|y| \leq w_2-3 = 3\} .
\end{align*}

\underline{\textbf{Case} $\mathbf{n =2k}$}: First suppose that $k \geq 1$ and that the theorem holds for all $j$, $0 \leq j < n=2k$.  If $a+bi \in B_{2k} \setminus B_{2k-1}$, then there exists a unit $u \in \mathbb{Z}[i]^{\times}$ such that $(ua-2^k) + bi \in B_{2k-1}$.  We can therefore assume, without loss of generality, that $(a-2^k) + bi \in B_{2k-1}$.  Thus there exists some $0 \leq j < k$ such that 
\begin{align*}
2^j \parallel (a-2^k, b); |a-2^k|, |b| \leq w_{2k-1} - 2^{j+1};&\text{ and } |a-2^k| + |b| \leq w_{2k} - 3\cdot 2^j.\\
\intertext{This implies that}
2^j \parallel (a,b); |a|, |b| \leq w_{2k-1} + 2^k - 2^{j+1}; &\text{ and }
|a|+|b| \leq w_{2k} + 2^k - 3 \cdot 2^j,\\
\intertext{and thus}
2^j \parallel (a,b); |a|, |b| \leq w_{2k} - 2^{j+1};&\text{ and }
|a|+|b| \leq w_{2k+1} - 3 \cdot 2^j,
\end{align*}
demonstrating that $B_{2k} \setminus 0$ is contained in the union in the theorem statement.

Now let $0 \leq j <k$ and let $a+bi$ satisfy $2^j \parallel (a,b)$; $|a|+|b| \leq w_{2k}-2^{j+1}$; and $|a|+|b| \leq w_{2k+1} - 3\cdot 2^j$.  Clearly, we can assume that $a \geq b \geq 0$.  Note that 
$2 \nmid (a/2^j, b/2^j)$; $\frac{a}{2^j} + \frac{b}{2^j} \leq w_{2(k-j)} -2$; and 
$\frac{a}{2^j} + \frac{b}{2^j} \leq w_{2(k-j) +1} -3$.  If $j > 0$, then 
$\frac{a}{2^j} + \frac{b}{2^j} i \in B_{2(k-j)}$ by our induction hypothesis and therefore $a+bi \in B_{2k}$.

If $j =0$, then $a$ and $b$ satisfy
$2 \nmid (a,b)$; $a, b \leq w_{2k} -2$;  and $a+b \leq w_{2k+1}-3$, implying that $ 0 \leq b \leq 2^{k+1} -2$.
Furthermore, if our pair $a,b \leq w_{2k-1}-2$ and $a+b \leq w_{2k -3}$, then $a+bi \in B_{2k-1} \subset B_{2k}$, so we are now reduced the cases where either 
$a > w_{2k-1}-2$ or $a+b > w_{2k}-3$.

In both of these scenarios, $2^{k-1} \leq a -2^k \leq 2^{k+1} -2$, so 
$$|a-2^k|, |b| \leq 2^{k+1}-2 = w_{2k-1} -2, \text{ and}$$
$$|a-2^k| + |b| = a+b-2^k \leq w_{2k+1}-3-2^k = 2^{k+1} + 2^k - 3 = w_{2k} -3.$$
The pair $a$ and $b$ have different parities, so $2 \nmid (a-2^k, b)$ and thus $(a-2^k) + bi \in B_{2k-1}$.  We therefore conclude that 
$a+bi \in (B_{2k-1} + 2^k) \subset B_{2k}$, and that $B_{2k}$ is indeed equal to the union in the theorem statement.

We assumed that $k\geq 1$ and that the theorem held for all $j$, $0 \leq j < 2k$.  After the first part of the proof, we can now say (under the same assumption), that the theorem holds for all $j$, $0 \leq j \leq 2k$.

\underline{\textbf{Case} $\mathbf{n=2k+1}$}:  Let $a+ bi \in B_{2k+1}\setminus B_{2k}$; we can assume without loss of generality that 
$(a+bi) - 2^k(1+i) \in B_{2k}$.  Applying our induction hypothesis, we know that there exists some $j$, $0 \leq j \leq k$, such that 
$2^j \parallel (a-2^k, b-2^k)$; $|a-2^k|,|b-2^k| \leq w_{2k}-2^{j+1}$; and $|a-2^k| + |b-2^k| \leq w_{2k+1} - 3\cdot 2^j$.

Note that if $j=k$, then $|a-2^k| + |b-2^k| \leq 2^k$, so one of the summands must be zero.  This implies that for all our $j$'s,
\begin{align*}
2^j \parallel (a, b); |a|,|b| \leq w_{2k+1}-2^{j+1}; \text{ and } |a| + |b| \leq w_{2k+2} - 3\cdot 2^j.
\end{align*}
We infer that $B_n = B_{2k+1}$ is contained inside the appropriate union.

To prove the other direction of containment, let $0 \leq j \leq k$ and let $a+bi$ satisfy $2^j \parallel (a,b)$; $|a|,|b| \leq w_{2k+1} - 2^{j+1}$;
and $|a|+|b| \leq w_{2k+2} - 2\cdot 2^j$.  Because the union in equation (\ref{big_union}) is closed under complex conjugation and multiplication by units, we can assume that $a \geq b \geq 0$.  If $j \geq 1$, then $2 \nmid (\frac{a}{2^j}, \frac{b}{2^j})$; $|\frac{a}{2^j}|, |\frac{b}{2^j}| \leq w_{2(k-j) +1 } -2$; and $|\frac{a}{2^j}| + |\frac{b}{2^j}| \leq w_{2(k-j) +2} -3$.  Our induction hypothesis implies that 
$\frac{a}{2^j} + \frac{b}{2^j}i \in B_{2(k-j)+i}$, and thus $a+bi \in B_{2k+1}$.

If $j=0$, $a \leq w_{2k}-2$, and $a+b \leq w_{2k+1} -3$, then $a+bi \in B_{2k} \subset B_{2k+1}$ by our induction hypothesis. If either $a>w_k -2$ or $a+b > w_{2k+1} -3$,
 then $a \geq 2^{k+1} -2 $, $2^k - 2 \leq a-2^k \leq w_{2k} - 3$, and thus 
$|a-2^k|, |b-2^k| \leq w_{2k}-3$.  If $b \geq 2^k$, then 
\begin{equation*}
|a - 2^k| + |b-2^k| = a+b - 2^{k+1} \leq w_{2k+2} - 2^{k+1} - 3 = w_{2k+1} -3.
\end{equation*}
If $b < 2^k$, then 
\begin{equation*}
|a-2^k| + |b-2^k| \leq a-2^k + 2^k = a \leq w_{2k+1} - 2.
\end{equation*}  The pair $a$ and $b$ have different parities, so the sum of 
$|a-2^k| + |b-2^k|$ must be odd, and is therefore bounded above by $w_{2k+1} -3$.  Putting the previous statements together, if $j =0$ and $a+bi \notin B_{2k}$, then $2 \nmid (a-2^k, b-2^k)$; $|a-2^k|, |b-2^k| \leq w_{2k}-3$; and $|a-2^k| + |b-2^k| \leq w_{2k+1} -3$.  We conclude that $(a-2^k) + (b-2^k)i \in B_{2k}$, so $a+bi \in B_{2k+1} + 2^k(1+i) \subset B_{2k+1}$.  
\end{proof}

We can also view this result geometrically.  The set $B_n$ forms a lacy octogon; Martin Fuchs has a nice illustration on page 72 of \cite{Fuchs}.
\begin{coro}\label{oct} The set $B_n$ satisfies
\begin{equation*}
B_n \subset Oct_n \subset B_{n+1} \cup \{ (1+i)^{n+2} (\mathbb{Z}[i])^{\times} \}.
\end{equation*}
If $a + bi \in Oct_n$, $2 \nmid (a,b)$, then $a+bi \in B_n$.
\end{coro}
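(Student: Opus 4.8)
The plan is to treat the three assertions separately, handling the two easy ones first so that they can be reused in the hard one. For $B_n\subset Oct_n$ I would read off from Theorem \ref{B_union} that every nonzero element of $B_n$ lies, for some $j\ge 0$, in the set with bounds $|x|,|y|\le w_n-2^{j+1}$ and $|x|+|y|\le w_{n+1}-3\cdot 2^j$; since $j\ge 0$ gives $2^{j+1}\ge 2$ and $3\cdot 2^j\ge 3$, these are tighter than the octagon bounds $|x|,|y|\le w_n-2$ and $|x|+|y|\le w_{n+1}-3$, so each such element is in $Oct_n$, and $0\in Oct_n$ as well. The final sentence of the corollary is literally the $j=0$ set of the same theorem: if $2\nmid(a,b)$ then $2^0\parallel(a,b)$, and the two octagon inequalities are exactly the conditions $|a|,|b|\le w_n-2^{0+1}$, $|a|+|b|\le w_{n+1}-3\cdot 2^0$, so $a+bi\in B_n$. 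I would record this last fact for every level, as I intend to feed it back into the main inclusion.

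For $Oct_n\subset B_{n+1}\cup(1+i)^{n+2}\mathbb{Z}[i]^\times$, fix $a+bi\in Oct_n\setminus 0$ and let $2^j\parallel(a,b)$. Since $2^j\le\max(|a|,|b|)\le w_n-2<2^{\lfloor n/2\rfloor+2}$, I get $0\le j\le\lfloor n/2\rfloor+1$ and would split on $j$. The case $j=0$ is finished by the slice-$0$ fact above (landing even in $B_n$). For $1\le j\le\lfloor n/2\rfloor$ I would divide through by $2^j$: with $a'=a/2^j$, $b'=b/2^j$ the pair is odd, and using $w_k=2w_{k-2}$ (hence $w_k=2^{j}w_{k-2j}$) the octagon bounds become $|a'|,|b'|\le w_{n-2j}-2^{1-j}$ and $|a'|+|b'|\le w_{n-2j+1}-3\cdot 2^{-j}$. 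The crucial input is that $a',b'$ are integers and $2^{1-j},3\cdot 2^{-j}>0$, so these sharpen to $|a'|\le w_{n-2j}-1$ and $|a'|+|b'|\le w_{n-2j+1}-1$; then the increments $w_{k+1}-w_k\ge 1$ (always) and $w_{k+1}-w_k\ge 2$ (for $k\ge 1$) give $|a'|,|b'|\le w_{n-2j+1}-2$ and $|a'|+|b'|\le w_{n-2j+2}-3$. Thus $a'+b'i$ is an odd point of $Oct_{n-2j+1}$, so by the slice-$0$ fact it lies in $B_{n-2j+1}$; multiplying back by $2^j=(-i)^j(1+i)^{2j}$ and using closure of the $B_k$ under units together with the fact that multiplication by $(1+i)^{2j}$ raises the index by $2j$, I conclude $a+bi\in B_{(n-2j+1)+2j}=B_{n+1}$.

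The only remaining case, $j=\lfloor n/2\rfloor+1$, is precisely where the exceptional corners arise. Here $2^j$ divides both $a$ and $b$ while $w_n-2<2^{j+1}$ forces $|a|,|b|\in\{0,2^j\}$, so $a+bi$ is one of finitely many explicit points, which I would enumerate. When $n$ is even the bound $|a|+|b|\le w_{n+1}-3$ excludes $|a|=|b|=2^j$, leaving $a+bi\in 2^{\lfloor n/2\rfloor+1}\mathbb{Z}[i]^\times=(1+i)^{n+2}\mathbb{Z}[i]^\times$, since $2^{\lfloor n/2\rfloor+1}=(-i)^{\lfloor n/2\rfloor+1}(1+i)^{n+2}$. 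When $n$ is odd the point is either $\pm 2^j,\pm 2^ji$, which a direct check against the slice-$j$ conditions for $B_{n+1}$ places in $B_{n+1}$, or $2^j(\pm1\pm i)=(1+i)^{n+2}\mathbb{Z}[i]^\times$.

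I expect the main obstacle to be the off-by-one in the reduction step: because $Oct_n$ is defined with the slightly loose bounds $w_n-2$ and $w_{n+1}-3$, dividing by $2^j$ does not return a point of $Oct_{n-2j}$ but only of $Oct_{n-2j+1}$, and recovering even that requires combining integrality of $a',b'$ with the exact growth increments of $w_k$. Pinning down these increments, and verifying that the leftover exponent $j=\lfloor n/2\rfloor+1$ yields exactly the associates of $(1+i)^{n+2}$ and nothing more, is the delicate part; the two easy inclusions and the unit- and index-bookkeeping for $(1+i)^{2j}$ are routine.
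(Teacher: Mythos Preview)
Your argument is correct, and the two easy pieces ($B_n\subset Oct_n$ and the odd-slice fact) are handled exactly as in the paper. For the main inclusion $Oct_n\subset B_{n+1}\cup(1+i)^{n+2}\mathbb{Z}[i]^\times$, however, you take a genuinely different route. The paper fixes $a+bi\in Oct_n$ with $2^j\parallel(a,b)$, splits into $n=2k$ and $n=2k+1$, and then shows \emph{directly} that the $Oct_n$ bounds (tightened to multiples of $2^j$) already imply the slice-$j$ conditions $|a|,|b|\le w_{n+1}-2^{j+1}$ and $|a|+|b|\le w_{n+2}-3\cdot 2^j$ of Theorem~\ref{B_union} for $B_{n+1}$; the boundary case $j=\lfloor n/2\rfloor+1$ is then read off by hand exactly as you do. Your approach instead divides through by $2^j$, uses integrality and the increments $w_{k+1}-w_k$ to push the rescaled point into $Oct_{n-2j+1}$, invokes the already-proven odd-slice fact there to land in $B_{n-2j+1}$, and multiplies back by $(1+i)^{2j}$. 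The paper's method is a bit shorter and avoids the delicate off-by-one bookkeeping you flag (the rescaled point lands only in $Oct_{n-2j+1}$, not $Oct_{n-2j}$); your method is more conceptual, explains \emph{why} the answer is $B_{n+1}$ rather than $B_n$ (you lose exactly one level in the rescaling), and reuses the corollary's own last sentence in a pleasing self-referential way.
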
 

\begin{proof}  The theorem clearly implies that $B_n \subset Oct_n$.  Suppose $a+bi \in Oct_n$, so that $|a|, |b| \leq w_n -2$ and $|a| + |b| \leq w_{n+1} -3$.  If $n =2k$ and $2^j \parallel (a,b)$ with $j \leq k$, we see that 
\begin{align*}
|a|,|b| &\leq 2^{k+1} + 2^k - 2^j \text{ and} \\
|a| + |b| &\leq 2^{k+2} - 2^j, \text{ so}\\
\end{align*}
\begin{align*}
|a|, |b| &\leq 2^{k+1} + 2^k - 2^j &= 2^{k+2} - 2^k - 2^j &\leq w_{n+1} - 2^{j+1} \text{ and}\\
|a|+|b| &\leq 2^{k+2} - 2^j        &= (2^{k+2} + 2^{k+1}) - 2^{k+1} - 2^j &\leq w_{n+2} - 3\cdot 2^j.
\end{align*}
If $j>k$, then $j = k+1$ and we have $a+bi \in 2^{k+1}(\mathbb{Z}[i]^{\times})$, and none of those elements are in $B_{n+1}$ as $\gcd (a,b) = 2^{k+1}$. 

If $n = 2k+1$ and $2^j \parallel (a,b)$ (this time w don't need $j \leq k$), then 
\begin{align*}
|a|,|b| &\leq 2^{k+2} - 2^j \text{ and} \\
|a|+|b| &\leq 2^{k+2} + 2^{k+1} - 2^j.
\end{align*}
From this we can see that $j \leq k+1$, so 
\begin{align*}
|a|,|b| &\leq 2^{k+2} + 2^{k+1}- 2^{k+1} -2^j &\leq w_{n+1} -2^{j+1} \text{ and}\\
|a|+|b| &\leq 2^{k+3} - 2^{k+1} - 2^j &\leq w_{n+2} - 2^{k+1} -2^j.
\end{align*}
If $j \leq k$, then $|a|+|b| \leq w_{n+2} - 3\cdot 2^j$; if $j=k+1$ and if $a+bi \in 2^{k+1}(\mathbb{Z}[i]^{\times})$, then 
$|a|+|b| = 2^{k+1} \leq w_{n+2} - 3\cdot 2^{k+1}$.  In both of these situations, then $a+bi \in B_{n+1}$.   Our final scenario
is if $a+bi \in 2^{k+1}(1+i)(\mathbb{Z}[i]^{\times}) \subset Oct_n;$ note that $2^{k+1}(1+i) \notin B_{n+1}$.

\end{proof}

We can also use Theorem \ref{B_union} to show the following useful result.

\begin{coro} \label{divides_xy} If $xy \in B_n \setminus 0$, then $x \in B_n$.  
\end{coro}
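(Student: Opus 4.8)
The plan is to reduce to the case where $y$ is a Gaussian prime and then isolate the prime $1+i$ from all the others. First I would induct on the norm $\Nm(y)$. If $y$ is a unit, then $x = (xy)y^{-1}$ is a unit multiple of $xy$, and since each $B_n$ is closed under multiplication by units (elements of $B_0$), we get $x \in B_n$. If $\Nm(y) > 1$, choose a prime $\pi \mid y$ and write $y = \pi y'$; then $(xy')\pi = xy \in B_n$, so the prime case would give $xy' \in B_n$, and the inductive hypothesis applied to $y'$ (of smaller norm) yields $x \in B_n$. Everything thus comes down to a \emph{prime lemma}: if $\pi$ is a Gaussian prime and $w\pi \in B_n \setminus 0$, then $w \in B_n$.

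To prove the prime lemma I would invoke the explicit description in Theorem \ref{B_union} and split into two cases, according to whether $\pi$ is an associate of $1+i$ (norm $2$) or $\Nm(\pi) \ge 5$; these exhaust all primes, since no Gaussian integer has norm $3$ and every element of norm $4$ is an associate of $(1+i)^2$. The crucial bookkeeping is the exact power of $2$ dividing the coordinate pair. Writing $2^{s}\,\|\,(\mathrm{Re}\,w,\mathrm{Im}\,w)$ and $2^{t}\,\|\,(\mathrm{Re}(w\pi),\mathrm{Im}(w\pi))$, I note that multiplication by a prime coprime to $1+i$ leaves $\ord_{1+i}$ unchanged, whence $s=t$, while multiplication by $1+i$ raises $\ord_{1+i}$ by one, so $t \in \{s,s+1\}$. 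In either situation $s \le t \le \lfloor n/2\rfloor$, so the valuation constraint in Theorem \ref{B_union} for $w$ is automatic, and only the two size constraints $\max(|\mathrm{Re}\,w|,|\mathrm{Im}\,w|) \le w_n - 2^{s+1}$ and $|\mathrm{Re}\,w|+|\mathrm{Im}\,w| \le w_{n+1}-3\cdot 2^{s}$ remain to be checked.

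For $\Nm(\pi) \ge 5$ I would obtain these from a crude norm estimate. Since $|w| = |w\pi|/|\pi| \le |w\pi|/\sqrt5$, and since $\max(|a|,|b|)\le|a+bi|$ and $|a|+|b|\le\sqrt2\,|a+bi|$ while $|w\pi|$ is bounded both by the $\ell^1$-size and by $\sqrt2$ times the $\ell^\infty$-size of its coordinates (the two quantities controlled by Theorem \ref{B_union}), the factor $1/\sqrt5$ leaves enough slack to deduce the required inequalities for $w$ from the recorded properties of the sequence $w_n$, in particular from $w_{n+1}\le\tfrac32 w_n$. These are routine to verify separately for $n$ even and $n$ odd, and I do not expect any difficulty there.

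The hard part will be the case $\pi \sim 1+i$, where the norm estimate is too weak because the scaling factor is only $\sqrt2$; this is the main obstacle. Here I would instead use the exact geometry of the map: $(1+i)(a+bi) = (a-b)+(a+b)i$ produces the $\ell^1$--$\ell^\infty$ duality $|a|+|b| = \max\!\big(|\mathrm{Re}(w\pi)|,|\mathrm{Im}(w\pi)|\big)$ and $\max(|a|,|b|) = \tfrac12\big(|\mathrm{Re}(w\pi)|+|\mathrm{Im}(w\pi)|\big)$ for $w = a+bi$. Substituting the bounds of Theorem \ref{B_union} for $w\pi$ into these identities and treating the subcases $t=s$ and $t=s+1$ separately, the inequalities needed for $w$ should reduce to the arithmetic facts $2w_n - w_{n+1} = 2^{\lfloor n/2\rfloor+1}$ and $w_{n+1}-w_n \ge 2^{\lfloor n/2\rfloor}$ coming from the recorded properties of $w_n$, together with $s \le \lfloor n/2\rfloor$. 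Finally, associates of $1+i$ other than $1+i$ itself are handled automatically, since the union in Theorem \ref{B_union} is closed under conjugation and under multiplication by units.
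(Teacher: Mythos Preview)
Your proposal is correct but takes a considerably longer route than the paper. You induct on $\Nm(y)$, reduce to the case where $y$ is a Gaussian prime, and then split that case into $\pi\sim 1+i$ (handled via the $\ell^1$/$\ell^\infty$ duality of multiplication by $1+i$) and $\Nm(\pi)\ge 5$ (handled by the crude norm bound $|w|\le|w\pi|/\sqrt5$). All of this checks out: the valuation bookkeeping $s\le t\le\lfloor n/2\rfloor$ is right, and the inequalities you appeal to ($2w_n-w_{n+1}=2^{\lfloor n/2\rfloor+1}$, $w_{n+1}-w_n\ge 2^{\lfloor n/2\rfloor}$) do exactly what you need in each subcase.

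The paper's argument is a one-shot direct computation with no induction and no prime decomposition. Writing $x=a+bi$, $y=c+di$ and normalising so that $a,b\ge 0$, $c>0$, the key observation is that whenever $d\neq 0$ one coordinate of $xy$ already dominates $|a|+|b|$: for $d>0$ the imaginary part is $bc+ad\ge a+b$, and for $d<0$ the real part is $ac+b|d|\ge a+b$. Since that coordinate is bounded by $w_n-2^{m+1}$ (where $2^m\|(\mathrm{Re}(xy),\mathrm{Im}(xy))$) and $k\le m$ for $2^k\|(a,b)$, one chain of inequalities
\[
|a|,|b|\le|a|+|b|\le w_n-2^{m+1}\le w_n-2^{k+1}\le w_{n+1}-3\cdot 2^{k}
\]
delivers both constraints at once. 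The remaining case $d=0$ is immediate. So the paper trades your structural reduction for a single combinatorial inequality; your approach is more modular, but the paper's is noticeably shorter and avoids singling out $1+i$ as a special case.
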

\begin{proof}
Let $x = a+bi$ and $y = c+di$.  Since $B_n$ is closed under complex conjugation and multiplication by units, we can assume that $a,b \geq 0$ and $c >0$.  We can break our proof into three cases.

\underline{$d=0$}: If $d=0$ and $2^m \parallel (ac,bc)$, then $ac, bc \leq w_n -2^{m+1}$ and $ ac+bc \leq w_{n+1} - 3 \cdot 2^m$.  If $2^k \parallel (a,b)$, implying that $0 \leq k \leq m \leq \frac{n}{2}$, then 
\begin{align*}
|a|,|b| &\leq w_n - 2^{m+1} &\leq w_n - 2^{k+1} \text{ and}\\
|a| + |b|&\leq w_{n+1} - 3 \cdot 2^m &\leq w_{n+1} - 3 \cdot 2^k,
\end{align*} so $x = a+bi \in B_n$.

\underline{$d>0$}: The product $xy = (ac-bd) + (bc+ad)i$, with $bc,ad \geq 0$.  If $2^k \parallel (a,b)$ and $2^m \parallel (ac-bd, bc +ad)$, then 
$0 \leq k \leq m \leq \frac{n}{2}$.  Since $c, d \neq 0$, we have 
\begin{equation*}
|a|,|b| \leq |a|+|b| \leq bc + ad \leq w_n - 2^{m+1} \leq w_n - 2^{k+1} \leq w_{n+1} - 3 \cdot 2^k.
\end{equation*}  We conclude that $x =a+bi \in B_n$.

\underline{$d<0$}: Let us rewrite $y$ as $c - |d|i$.  The product $xy = (ac+b|d|) + (bc - a|d|)i$, with $ac, b|d| \geq 0$.  If $2^k \parallel (a,b)$ and $2^m \parallel (ac + b|d|, bc -a|d|)$, then $0\leq k \leq m \leq \frac{n}{2}$.  Since $c,d \neq 0$, we have 
\begin{equation*}
|a| , |b| \leq |a|+|b| \leq ac + b|d| \leq w_n - 2^{m+1} \leq w_n - 2^{k+1} \leq w_{n+1} - 3 \cdot 2^k,
\end{equation*}
so $x = a+bi \in B_n$. 
\end{proof}

The goal of the rest of the paper is to show that $A_{\mathbb{Z}[i], n} = B_n$.  To show this, we will need several facts about the sets $A_{\mathbb{Z}[i],n}$.

\section{The sets $A_{\mathbb{Z}[i],n}$}\label{A_sets}

It is well-known that $\mathbb{Z}[i]$ is norm-Euclidean (i.e. the algebraic norm $N(a+bi) = a^2 + b^2$ is a Euclidean function on $\mathbb{Z}[i]$), and therefore Motzkin's Lemma (Lemma \ref{Motzkins_Lemma}), implies that $\mathbb{Z}[i] = \bigcup_{n=0}^{\infty} A_{\mathbb{Z}[i],n}$.

We remind ourselves that 
\begin{align*}
A_{\mathbb{Z}[i], 0} &:= \{0, \pm 1, \pm i \}\\
\intertext{and that, for $n \geq 1$,}
A_{\mathbb{Z}[i],n} &:= A_{\mathbb{Z}[i],n-1} \cup \{a+bi \in \mathbb{Z}[i] :A_{\mathbb{Z}[i], n-1} \rightarrow \mathbb{Z}[i]/(a+bi) \}.
\end{align*}  It is clear from this formulation that the $A_{\mathbb{Z}[i], n}$ are closed under multiplication by a unit, as the ideals generated by $a+bi$ and $u(a+bi)$, $u \in \mathbb{Z}[i]^{\times}$, are the same.  It is trickier to see that the sets $A_{\mathbb{Z}[i], n}$ are closed under complex conjugation.

\begin{lemma}  The sets $A_{\mathbb{Z}[i],n}$ are closed under complex conjugation.
\end{lemma}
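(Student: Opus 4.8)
The plan is to prove that complex conjugation preserves each $A_{\mathbb{Z}[i],n}$ by induction on $n$, using the fact that conjugation is a ring automorphism of $\mathbb{Z}[i]$. The base case is immediate: $A_{\mathbb{Z}[i],0} = \{0, \pm 1, \pm i\}$ is visibly stable under $a+bi \mapsto a-bi$. For the inductive step, I would assume $A_{\mathbb{Z}[i],n-1}$ is closed under conjugation and show the same for $A_{\mathbb{Z}[i],n}$.

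Let me write $\sigma$ for complex conjugation. The key structural observation is that $\sigma$ is a ring automorphism, so for any nonzero $\beta \in \mathbb{Z}[i]$ it induces a ring isomorphism $\overline{\sigma}: \mathbb{Z}[i]/(\beta) \to \mathbb{Z}[i]/(\overline{\beta})$ sending $[a] \mapsto [\overline{a}]$. The crucial step is then to show that $\beta \in A_{\mathbb{Z}[i],n}$ implies $\overline{\beta} \in A_{\mathbb{Z}[i],n}$. Suppose $\beta \in A_{\mathbb{Z}[i],n} \setminus A_{\mathbb{Z}[i],n-1}$, so that $A_{\mathbb{Z}[i],n-1} \twoheadrightarrow \mathbb{Z}[i]/(\beta)$, i.e. every coset in $\mathbb{Z}[i]/(\beta)$ has a representative in $A_{\mathbb{Z}[i],n-1}$. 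I would argue that $A_{\mathbb{Z}[i],n-1} \twoheadrightarrow \mathbb{Z}[i]/(\overline{\beta})$ as well: given any $[c] \in \mathbb{Z}[i]/(\overline{\beta})$, apply $\overline{\sigma}^{-1}$ to land in $\mathbb{Z}[i]/(\beta)$, use surjectivity to find a representative $r \in A_{\mathbb{Z}[i],n-1}$, and then push $r$ forward under $\overline{\sigma}$; since $\overline{\sigma}([r]) = [\overline{r}]$ and, by the inductive hypothesis, $\overline{r} \in A_{\mathbb{Z}[i],n-1}$, we obtain a representative of $[c]$ lying in $A_{\mathbb{Z}[i],n-1}$. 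Hence $\overline{\beta} \in A_{\mathbb{Z}[i],n}$.

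Concretely, the mechanism is that conjugation commutes with the covering relation: if $[a] = [r]$ in $\mathbb{Z}[i]/(\beta)$ with $r \in A_{\mathbb{Z}[i],n-1}$, then $[\overline{a}] = [\overline{r}]$ in $\mathbb{Z}[i]/(\overline{\beta})$, and $\overline{r} \in A_{\mathbb{Z}[i],n-1}$ by induction. So conjugating a complete set of representatives drawn from $A_{\mathbb{Z}[i],n-1}$ for $\mathbb{Z}[i]/(\beta)$ yields a complete set of representatives drawn from $A_{\mathbb{Z}[i],n-1}$ for $\mathbb{Z}[i]/(\overline{\beta})$. This is exactly the content needed to conclude $\overline{\beta} \in \{a+bi : A_{\mathbb{Z}[i],n-1} \twoheadrightarrow \mathbb{Z}[i]/(a+bi)\}$, and combined with the inductive hypothesis on $A_{\mathbb{Z}[i],n-1}$ itself, this shows $A_{\mathbb{Z}[i],n}$ is closed under conjugation.

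The step I expect to be the main (though mild) obstacle is verifying cleanly that conjugation descends to a well-defined bijection on cosets between the two quotient rings, namely that $a \equiv a' \pmod{\beta}$ if and only if $\overline{a} \equiv \overline{a'} \pmod{\overline{\beta}}$. This follows because $\beta \mid (a - a')$ is equivalent to $\overline{\beta} \mid \overline{(a-a')} = \overline{a} - \overline{a'}$, since conjugation is a ring automorphism and therefore preserves divisibility. Once this correspondence is in hand, the rest is a formal transport of the surjectivity condition defining Motzkin's construction, and the induction closes.
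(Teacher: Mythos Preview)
Your proposal is correct and follows essentially the same approach as the paper's proof: induction on $n$, with the inductive step transporting a representative $r \in A_{\mathbb{Z}[i],n-1}$ of $[\overline{x}]$ modulo $\beta$ to the representative $\overline{r} \in A_{\mathbb{Z}[i],n-1}$ of $[x]$ modulo $\overline{\beta}$. The only difference is cosmetic: the paper writes this out via the explicit equation $\overline{x} = q\beta + r \Rightarrow x = \overline{q}\,\overline{\beta} + \overline{r}$, whereas you phrase the same move in terms of the induced isomorphism $\mathbb{Z}[i]/(\beta) \cong \mathbb{Z}[i]/(\overline{\beta})$.
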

\begin{proof} We prove this by induction; note that $A_{\mathbb{Z}[i],0} = \{0, \pm 1, \pm i\}$ is closed under complex conjugation.  
Suppose that $A_{\mathbb{Z}[i],n}$ is closed under complex conjugation, that $a+bi \in A_{\mathbb{Z}[i], n+1}$, and that $[x] \in \mathbb{Z}[i] / (\overline{a+bi})$.  We know that there exists some $q$ in $\mathbb{Z}[i]$ and some $r \in A_{\mathbb{Z}[i], n}$ such that 
\begin{align*}
\overline{x} &= q (a+bi) + r, \text{ or }
x = \overline{q} (\overline{a+bi} ) + \overline{r}.
\end{align*}
Our induction hypothesis forces $\overline{r}$ to be an element of $A_{\mathbb{Z}[i], n}$, so $\overline{a+bi} \in A_{\mathbb{Z}[i], n+1}$.
\end{proof}

\begin{coro}\label{you_get_the_whole_set} An element $a+bi \in A_{\mathbb{Z}[i],n}$ if and only if $\{ \pm a \pm bi \}, \{ \pm b \pm ai\} \subset A_{\mathbb{Z}[i],n}$.
\end{coro}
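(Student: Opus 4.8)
The plan is to deduce this entirely from the two closure properties already in hand: each $A_{\mathbb{Z}[i],n}$ is closed under multiplication by the units $\{\pm 1, \pm i\}$ (noted immediately before the conjugation lemma) and under complex conjugation (the preceding lemma). The reverse implication is immediate, since $a+bi$ is itself one of the four elements $\pm a \pm bi$, so if the eight-element set lies in $A_{\mathbb{Z}[i],n}$ then in particular $a+bi$ does.

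For the forward implication, I would start from $a+bi \in A_{\mathbb{Z}[i],n}$ and track its orbit under the two operations. Multiplying successively by $i$ produces
\[
a+bi,\qquad i(a+bi) = -b+ai,\qquad -(a+bi)=-a-bi,\qquad -i(a+bi)=b-ai,
\]
all of which lie in $A_{\mathbb{Z}[i],n}$ by closure under units. Applying complex conjugation to each of these four elements, and invoking closure under conjugation, yields the four further elements
\[
a-bi,\qquad -b-ai,\qquad -a+bi,\qquad b+ai.
\]
Collecting the eight outputs gives exactly $\{\pm a \pm bi\} \cup \{\pm b \pm ai\}$, so all of these belong to $A_{\mathbb{Z}[i],n}$, which is the desired conclusion.

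There is no genuine obstacle here; the only point to verify is that multiplication-by-$i$ together with conjugation reaches all eight elements rather than a proper subset. This is simply the observation that these two maps generate the dihedral group of order eight acting on $a+bi$, whose orbit is the stated set. (When $a=b$, $a=0$, or $b=0$ the orbit is smaller, but it remains contained in $\{\pm a \pm bi\} \cup \{\pm b \pm ai\}$, so the conclusion is unaffected.)
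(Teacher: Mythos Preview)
Your proof is correct and follows exactly the route the paper intends: the corollary is stated without proof immediately after the conjugation lemma, and is meant to be read as an immediate consequence of closure under multiplication by units (noted just before that lemma) together with closure under complex conjugation. You have simply written out the orbit computation that the paper leaves implicit.
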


We can use the properties above the prove to $A_{\mathbb{Z}[i], n} = B_n$, which gives the minimal Euclidean function on $\mathbb{Z}[i]$.
In order to prove the equality, we start with the following lemmas.

\begin{lemma} \label{containment}If $A_{\mathbb{Z}[i], n }= B_n$, then $A_{\mathbb{Z}[i], n+1} \subset B_{n+1}$.
\end{lemma}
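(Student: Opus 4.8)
The plan is to argue the contrapositive of the defining surjection. Since $A_{\mathbb{Z}[i],n}=B_n\subseteq B_{n+1}$, any element of $A_{\mathbb{Z}[i],n+1}$ already lying in $A_{\mathbb{Z}[i],n}$ is automatically in $B_{n+1}$, so it suffices to treat a genuinely new element $\beta\in A_{\mathbb{Z}[i],n+1}\setminus A_{\mathbb{Z}[i],n}$, that is, one for which $B_n$ surjects onto $\mathbb{Z}[i]/\beta$. I would show that if $\beta\notin B_{n+1}$ then this surjection must fail, by exhibiting a coset of $\mathbb{Z}[i]/\beta$ with no representative in $B_n$. Because $B_n$ and the property of belonging to $B_{n+1}$ are invariant under units and complex conjugation (Corollary \ref{you_get_the_whole_set}), and $\mathbb{Z}[i]/\beta\cong\mathbb{Z}[i]/(u\overline{\beta})$ for any unit $u$, I may assume $\beta=a+bi$ with $a\ge b\ge 0$.

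First I would dispose of the case $\beta\in Oct_n$. By Corollary \ref{oct} we have $Oct_n\subseteq B_{n+1}\cup\{(1+i)^{n+2}(\mathbb{Z}[i])^{\times}\}$, so if $\beta\in Oct_n$ but $\beta\notin B_{n+1}$ then $\beta=u(1+i)^{n+2}$ for some unit $u$. Here the obstruction is arithmetic: consider the coset $[(1+i)^{n+1}]$. Every representative has the form $(1+i)^{n+1}+\beta w=(1+i)^{n+1}(1+(1+i)uw)$, whose second factor is not divisible by $(1+i)$; hence each representative is nonzero and exactly divisible by $(1+i)^{n+1}$. By the earlier lemma that a nonzero element of $B_n$ cannot be divisible by $(1+i)^{n+1}$, no representative lies in $B_n$, so the surjection fails.

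The remaining case $\beta\notin Oct_n$ is the crux. Since $B_n\subseteq Oct_n$, it is enough to locate a residue all of whose representatives violate one of the two octagon constraints $\max(|x|,|y|)\le w_n-2$ and $|x|+|y|\le w_{n+1}-3$, for then no representative can meet $B_n$. The model case $\beta=w_{n+1}-1$ (a real odd integer just too large to belong to $B_{n+1}$) displays the shape of the answer: the diagonal residue represented by $z=\tfrac{w_{n+1}-2}{2}(1+i)$ has minimal taxicab size $|x|+|y|=w_{n+1}-2$, and every other representative is strictly larger, so the whole coset escapes $Oct_n$. In general I would take the residue nearest the center of a fundamental parallelogram of the lattice $\beta\mathbb{Z}[i]$; the hypothesis $\beta\notin B_{n+1}$ forces, through Theorem \ref{B_union}, the failure of a defining inequality of $B_{n+1}$ (namely $a>w_{n+1}-2^{j+1}$ or $a+b>w_{n+2}-3\cdot 2^j$, where $2^j\parallel(a,b)$, or else an excessive power of $(1+i)$ dividing $\beta$), which makes $\beta\mathbb{Z}[i]$ too coarse to pull $z$ back inside the octagon. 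The four identities satisfied by the sequence $w_n$ are exactly what converts these strict inequalities into the statement that $z+\beta w\notin Oct_n$ for every $w\in\mathbb{Z}[i]$.

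I expect the main obstacle to be this last step, precisely because the construction is not uniform: for the real lattice the deep hole sits on the diagonal, where taxicab size is maximal relative to Euclidean size, but for a general $\beta$ the fundamental square is rotated, so the choice of $z$ must adapt to the orientation of $\beta$ and to the two parities of $n$, and one must check that subtracting any Gaussian multiple of $\beta$ cannot simultaneously repair both octagon inequalities. A secondary difficulty occurs when $Oct_n$ does cover $\mathbb{Z}[i]/\beta$ yet $B_n$ does not; there I would instead choose $z$ to be an even lattice point (with $2^m\parallel$ its coordinates, $m\ge 1$) in the boundary shell of $Oct_n$, and invoke the sharper $j\ge 1$ bounds of Theorem \ref{B_union} together with Corollary \ref{divides_xy} to eliminate every representative. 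Once a missed coset is produced in each case, the surjection hypothesis is contradicted, forcing $\beta\in B_{n+1}$ and hence $A_{\mathbb{Z}[i],n+1}\subseteq B_{n+1}$.
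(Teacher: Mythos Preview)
Your plan leaves the main case genuinely open. In Case~2 ($\beta\notin Oct_n$) you only sketch a strategy---pick a ``deep hole'' residue and check that no translate by $\beta\mathbb{Z}[i]$ lands in $Oct_n$---and you yourself flag the obstacles: the construction of $z$ is not uniform, depends on the orientation of $\beta$ and the parity of $n$, and you must handle separately the situation where $Oct_n$ does cover $\mathbb{Z}[i]/\beta$ but $B_n$ does not. None of these cases is carried out, so as it stands this is not a proof but an outline of a case analysis whose hardest branches are precisely the ones left undone.

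The paper avoids all of this by a single uniform trick which you in fact discovered in your Case~1 but did not push far enough. Test only the coset $[(1+i)^{n+1}]$. If $\beta\in A_{\mathbb{Z}[i],n+1}$ then there exist $q\in\mathbb{Z}[i]$ and $r\in A_{\mathbb{Z}[i],n}=B_n$ with $(1+i)^{n+1}=q\beta+r$, hence
\[
q\beta=(1+i)^{n+1}-r\in B_{n+1}.
\]
This product is nonzero because $(1+i)^{n+1}\notin B_n$ (the ``$(1+i)^{k+1}\mid x\in B_k\Rightarrow x=0$'' lemma). Now one single application of Corollary~\ref{divides_xy} (if $xy\in B_m\setminus 0$ then $x\in B_m$) gives $\beta\in B_{n+1}$ directly. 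There is no contrapositive, no octagon geometry, no case split on whether $\beta\in Oct_n$, and no search for a deep hole: the divisor-closure of $B_{n+1}$ does all the work. Your Case~1 argument is essentially the special case $q\in\mathbb{Z}[i]^\times$ of this computation; the point you missed is that Corollary~\ref{divides_xy} lets the same coset succeed even when $q$ is not a unit.
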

\begin{proof}  Given $a+bi \in A_{\mathbb{Z}[i], n+1}$, there exists some $q \in \mathbb{Z}[i]$ and $r \in A_{\mathbb{Z}[i], n}$ such that 
$(1+i)^{n+1} = q(a+bi) +r$.  Rearranging terms reveals that 
\begin{equation*}
q(a+bi) = (1+i)^{n+1} - r \in B_{n+1} \setminus 0,
\end{equation*}
so Corollary \ref{divides_xy} tells us that $a+bi \in B_{n+1}$.
\end{proof}

\begin{lemma}\label{multiply_by_1+i}  If $A_{\mathbb{Z}[i], j} = B_j$ for all $j \leq n$, then $(1+i)B_n \subset A_{\mathbb{Z}[i], n+1}$.
\end{lemma}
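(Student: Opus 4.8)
The plan is to apply Motzkin's construction directly. Since $A_{\mathbb{Z}[i],n+1}$ is, by definition, $A_{\mathbb{Z}[i],n}$ together with every $\beta$ satisfying $A_{\mathbb{Z}[i],n}\twoheadrightarrow\mathbb{Z}[i]/\beta$, and since $A_{\mathbb{Z}[i],n}=B_n$ by hypothesis, it suffices to fix $\gamma\in B_n$ and prove that every coset of $\mathbb{Z}[i]/(1+i)\gamma$ has a representative in $B_n$. The cases $\gamma=0$ and $\gamma\in\mathbb{Z}[i]^{\times}$ are immediate (then $(1+i)\gamma$ is $0$ or an associate of $1+i$), so I would assume $\gamma$ is a nonzero non-unit.

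The argument rests on two observations. First, unwinding the definition of $B_n$ and factoring out the lowest digit yields the decomposition $B_n=B_0+(1+i)B_{n-1}$; that is, every element of $B_n$ has the form $v_0+(1+i)\beta$ with $v_0\in B_0=\{0,\pm1,\pm i\}$ and $\beta\in B_{n-1}$. Second, I claim that every coset of $\mathbb{Z}[i]/\gamma$ has a representative in $B_{n-1}$. Indeed, $\gamma\in B_n=A_{\mathbb{Z}[i],n}$ is a nonzero non-unit, hence $\gamma\notin A_{\mathbb{Z}[i],0}$; letting $m$ be the least index with $\gamma\in A_{\mathbb{Z}[i],m}$ we have $1\le m\le n$ and $\gamma\notin A_{\mathbb{Z}[i],m-1}$, so Motzkin's construction forces $A_{\mathbb{Z}[i],m-1}\twoheadrightarrow\mathbb{Z}[i]/\gamma$. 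By hypothesis $A_{\mathbb{Z}[i],m-1}=B_{m-1}\subseteq B_{n-1}$, which gives the claim.

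I would then combine these by reducing modulo $(1+i)\gamma$ in two stages. Given a class $[x]\in\mathbb{Z}[i]/(1+i)\gamma$, first choose $v_0\in B_0$ with $v_0\equiv x\pmod{1+i}$, taking $v_0=0$ if $(1+i)\mid x$ and $v_0=1$ otherwise, and write $x-v_0=(1+i)z$. Next, the second observation produces $\beta\in B_{n-1}$ with $\beta\equiv z\pmod{\gamma}$; set $r:=v_0+(1+i)\beta$. The decomposition $B_n=B_0+(1+i)B_{n-1}$ shows $r\in B_n$, while $x-r=(1+i)(z-\beta)\in(1+i)\gamma\mathbb{Z}[i]$ shows $[r]=[x]$. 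Thus $B_n\twoheadrightarrow\mathbb{Z}[i]/(1+i)\gamma$, so $(1+i)\gamma\in A_{\mathbb{Z}[i],n+1}$, and letting $\gamma$ range over $B_n$ gives $(1+i)B_n\subseteq A_{\mathbb{Z}[i],n+1}$.

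The step I expect to require the most care is the second observation: that representatives modulo $\gamma$ can be found already in $B_{n-1}$, not merely in $B_n$. This one-level drop is exactly what makes $(1+i)\beta$ land in $B_n$ rather than only in $B_{n+1}$, and it is the single place where the full inductive hypothesis $A_{\mathbb{Z}[i],j}=B_j$ for all $j\le n$, together with the precise definition of Motzkin's construction, is used. The remaining subtlety is purely bookkeeping the degenerate cases, where $\mathbb{Z}[i]/\gamma$ is trivial or $\gamma=0$, so that the two-stage reduction remains valid.
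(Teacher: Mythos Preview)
Your proof is correct and follows essentially the same two-stage reduction as the paper: first reduce $x$ modulo $1+i$ to obtain $x=(1+i)q+r$ with $r\in B_0$, then reduce $q$ modulo $\gamma$ to obtain $q=q'\gamma+r'$ with $r'\in A_{\mathbb{Z}[i],n-1}=B_{n-1}$, and conclude that $x-q'(1+i)\gamma=(1+i)r'+r\in B_n$. You are in fact more explicit than the paper in justifying the one-level drop (via the minimal Motzkin index of $\gamma$) and in disposing of the degenerate cases $\gamma=0$ and $\gamma\in\mathbb{Z}[i]^{\times}$.
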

\begin{proof} We start with $a \in B_n$.  Given $x \in \mathbb{Z}[i]$, there exists some $q\in \mathbb{Z}[i]$ and $r \in A_{\mathbb{Z}[i], 0}$ such that $x = (1+i) q +r$.  Since $a \in A_{\mathbb{Z}[i], n}$, there exists $q' \in \mathbb{Z}[i]$ and $r' \in A_{\mathbb{Z}[i], n-1} = B_{n-1}$ such that $q = q' a + r'$, so
\begin{align*}
(1+i)q + r &= (1+i)q' a + (1+i)r' +r,\\
\intertext{and thus}
x &= q' (1+i)a + ((1+i)r'+r).
\end{align*}
The element $(1+i)r' + w \in B_n = A_{\mathbb{Z}[i], n}$, so $a(1+i) \in A_{\mathbb{Z}[i], n+1}$.
\end{proof}
 %
 
All that remains for us to show is that if $a+bi \in B_{n+1} \setminus B_n$, $(1+i) \nmid a+bi$, then $B_n \twoheadrightarrow \mathbb{Z}[i]/(a+bi)$.  If $S \subset \mathbb{Z}[i]$ is a set that contains a representative of every coset in $\mathbb{Z}/(a+bi)$ and if $S \subset 
\bigcup_{x \in \mathbb{Z}[i]} (B_n + x(a+bi))$, then $B_n \twoheadrightarrow \mathbb{Z}[i]/(a+bi)$.  The next section will explore potential such sets $S$.

\section{Sets of Representatives of Cosets of $a+bi$}

Since we are working in the Gaussian integers, assume that all the sets mentioned below in this section ( the sets $S, T, U,$ and $\mathscr{S}$)are all subsets of $\mathbb{Z}[i]$.  

\begin{lemma}\label{a_square}  If $a >b \geq 0$; if $k_0, k_1 \in \mathbb{Z}$; if 
\begin{equation*}
S = \{ x+yi: k_0 \leq x < k_0 +a, k_1 \leq y < k_1 + a\};
\end{equation*}
and if $\alpha + \beta i, c + di$ are distinct elements of $S$, then $\alpha + \beta i \not \equiv c+di \pmod{a + bi}$.
\end{lemma}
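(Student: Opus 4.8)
The plan is to translate the congruence condition into a statement about which multiples of $a+bi$ can have small real and imaginary parts, and then to solve explicitly for the quotient. First I would observe that $\alpha + \beta i \equiv c + di \pmod{a+bi}$ holds exactly when $a+bi$ divides $(\alpha - c) + (\beta - d)i$. Because every element of $S$ has real part among the $a$ consecutive integers $k_0, \dots, k_0 + a - 1$ and imaginary part among $k_1, \dots, k_1 + a - 1$, the differences satisfy $|\alpha - c| \le a - 1 < a$ and $|\beta - d| \le a - 1 < a$. Thus the lemma reduces to the claim that the only element of the ideal $(a+bi)$ whose real and imaginary parts both have absolute value strictly less than $a$ is $0$. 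Since distinct elements of $S$ produce a nonzero difference $(\alpha - c) + (\beta - d)i$, proving this claim forces that difference to be a non-multiple of $a+bi$, which is what we want.

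Next I would make the multiple explicit. Write $(a+bi)(s+ti) = p + qi$ with $s, t \in \mathbb{Z}$, so that $p = as - bt$ and $q = bs + at$, and assume $|p| < a$ and $|q| < a$. Solving this linear system yields the identities $(a^2 + b^2)s = ap + bq$ and $(a^2 + b^2)t = aq - bp$. These let me bound the quotient: from $|p|, |q| < a$ I get $(a^2+b^2)|s| < a \cdot a + b \cdot a = a(a+b)$, and likewise for $|t|$. The crucial estimate is then $a(a+b) < 2(a^2 + b^2)$, which is equivalent to $a^2 - ab + 2b^2 > 0$; this holds for all $(a,b)$ because $a^2 - ab + 2b^2 = (a - \tfrac{b}{2})^2 + \tfrac{7}{4}b^2 > 0$. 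Hence $|s|, |t| < 2$, so $s, t \in \{-1, 0, 1\}$.

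Finally I would dispose of the finitely many nonzero possibilities for $(s,t)$. Replacing $s+ti$ by its negative only flips the signs of $p$ and $q$, so up to sign there are four cases: $(s,t) = (1,0), (0,1), (1,1), (1,-1)$, giving $(p,q) = (a,b),\ (-b,a),\ (a-b,\, a+b),\ (a+b,\, -(a-b))$ respectively. In each case one of $|p|, |q|$ is either $a$ or $a+b$, hence at least $a$ (using $b \ge 0$), contradicting $|p|, |q| < a$. Therefore $(s,t) = (0,0)$ and $p = q = 0$, which proves the claim and the lemma. The main obstacle is the bounding step of the second paragraph: the bookkeeping before it and the case check after it are routine, so the whole argument hinges on forcing the quotient $s+ti$ into $\{-1,0,1\}^2$, for which the inequality $a^2 - ab + 2b^2 > 0$ is the key input.
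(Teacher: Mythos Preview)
Your argument is correct. The reduction to showing that the only multiple of $a+bi$ with both coordinates strictly less than $a$ in absolute value is $0$ matches the paper, and your Cramer's-rule bound together with the case check is valid; the inequality $a(a+b) < 2(a^2+b^2)$ does hold for all integers $a>b\ge 0$ (it is $a^2>0$ when $b=0$ and has negative discriminant in $a$ when $b\ne 0$).

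The paper takes a slightly different route to bound the quotient. Instead of solving for $s$ and $t$ separately, it uses multiplicativity of the norm: from $|\alpha-c|,|\beta-d|\le a-1$ one gets
\[
\mathrm{Nm}(s+ti)\,\mathrm{Nm}(a+bi)=(\alpha-c)^2+(\beta-d)^2\le 2(a-1)^2<2(a^2+b^2),
\]
so $\mathrm{Nm}(s+ti)=1$ and $s+ti$ is a unit. This lands directly on four candidates $\{\pm(a+bi),\pm(b-ai)\}$ rather than the eight nonzero pairs $(s,t)\in\{-1,0,1\}^2$ you obtain, and it bypasses the auxiliary inequality $a^2-ab+2b^2>0$. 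Your approach is a touch more elementary in that it never invokes the norm, trading that for one extra algebraic inequality and twice as many (equally trivial) cases; the paper's norm argument is shorter and more in the spirit of working in $\mathbb{Z}[i]$.
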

\begin{proof} Suppose, leading to a contradiction, that $\alpha + \beta i \equiv c+di \pmod{a+bi}$.  In other words, suppose that there exists some $y \in \mathbb{Z}[i]$ such that $(\alpha - c) + (\beta -d) i = y (a+bi)$.  
Note that 
\begin{equation*}
Nm(y) Nm(a+bi) = (\alpha -c)^2 + (\beta -d)^2 \leq 2(a-1)^2 < 2(a^2 + b^2)=2 Nm(a+bi).
\end{equation*}
The norm of $y$ is a positive integer, so it must equal one, implying that $y \in \mathbb{Z}[i]^{\times}$.  We conclude that 
\begin{equation*}
(\alpha - c) + (\beta -d)i \in \{ \pm (a+bi), \pm (b-ai)\},
\end{equation*}
but as $|\alpha -c|, |\beta -d| \leq a-1$, this is a contradiction.
\end{proof}

\begin{lemma} \label{two_squares} If $a > b \geq 0$, if $T=\{ x+iy: 0 \leq x <b, -b \leq y <0\},$ if 
$S = \{ x+yi: 0 \leq x,y < a\}$, 
and if $\alpha + \beta i, c+di$ are distinct elements of $S \cup T$, then $\alpha + \beta i \not \equiv c+di \pmod{a +bi}$.
\end{lemma}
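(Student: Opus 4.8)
The plan is to leverage a cardinality observation: the square $S$ has $a^2$ points and the square $T$ has $b^2$ points, and $S$ and $T$ are disjoint (one has nonnegative imaginary part, the other negative), so $|S \cup T| = a^2 + b^2 = Nm(a+bi)$. Thus the lemma is really the statement that $S \cup T$ is a full set of coset representatives, i.e. a fundamental domain for the lattice $(a+bi)\mathbb{Z}[i]$; proving that no two distinct elements are congruent is equivalent to proving this. Suppose, toward a contradiction, that $\alpha + \beta i \equiv c + di \pmod{a+bi}$ are distinct elements of $S \cup T$, and set $(u,v) = (\alpha - c, \beta - d)$, so that $u + vi = y(a+bi)$ for some nonzero $y \in \mathbb{Z}[i]$. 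I would then split into three cases according to whether the two points lie in $S$ or in $T$.

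If both points lie in $S$, then Lemma \ref{a_square} with $k_0 = k_1 = 0$ applies verbatim and yields the contradiction. If both points lie in $T$, then $|u|, |v| \leq b-1$, so $Nm(u+vi) \leq 2(b-1)^2$; since $a \geq b+1$ this is strictly less than $a^2 + b^2 = Nm(a+bi)$, which forces $Nm(y) < 1$ and hence $y = 0$ (and when $b = 0$ the set $T$ is empty, so this case is vacuous). Both of these cases are routine.

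The remaining, and most delicate, case is when one point lies in $S$ and the other in $T$; since the hypotheses are symmetric in the two points, I may relabel so that $\alpha + \beta i \in S$ and $c + di \in T$. Reading off coordinate ranges gives $1 - b \leq u \leq a - 1$ and $1 \leq v \leq a + b - 1$. The crucial step is a norm estimate: these bounds give $Nm(u+vi) \leq (a-1)^2 + (a+b-1)^2$, and I would verify that this is strictly less than $4\,Nm(a+bi) = 4(a^2+b^2)$, the difference reducing to $2a(a-b) + 3b^2 + 4a + 2b - 2 > 0$, which holds because $a \geq b$ and $a \geq 1$. Hence $Nm(y) < 4$, and since no Gaussian integer has norm $3$, we obtain $Nm(y) \in \{1, 2\}$. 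It then suffices to check the eight candidates $y \in \{\pm 1, \pm i, \pm 1 \pm i\}$ one by one: computing $y(a+bi)$ in each case shows that every resulting $(u,v)$ violates at least one of the two range constraints, the tightest near-misses being $y = 1$ (which forces $u = a > a-1$) and $y = 1+i$ (which forces $v = a+b > a+b-1$). These boundary near-misses are exactly what one expects, confirming that $S \cup T$ is a fundamental domain and that the estimate is sharp.

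I expect the mixed case to be the main obstacle, both in pinning down the correct coordinate ranges for $u$ and $v$ and in making the norm inequality tight enough to exclude $Nm(y) = 4$; once $Nm(y) \leq 2$ is secured, the finite enumeration is mechanical.
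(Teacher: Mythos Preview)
Your proposal is correct and follows essentially the same route as the paper: reduce the $S$--$S$ and $T$--$T$ cases to Lemma~\ref{a_square} (you do the $T$--$T$ case by a direct norm bound, but since $T$ sits inside an $a\times a$ square this is equivalent), and in the mixed case bound the norm of the quotient by $(a-1)^2 + (a+b-1)^2 < 4(a^2+b^2)$ to force $Nm(y)\in\{1,2\}$, then eliminate the eight candidates $y\in\{\pm 1,\pm i,\pm 1\pm i\}$ one by one. The paper does the last step by writing the set $C=\{y(a+bi)\}$ and asserting $(\alpha+\beta i + C)\cap S=\emptyset$; your explicit coordinate-range check is the same verification spelled out in full.
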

\begin{proof} We know from Lemma \ref{a_square} that two distinct elements of $S$ (respectively, $T$) are not equivalent modulo $a+bi$.  It remains to show that if $\alpha + \beta i \in T$ and $c+di \in S$, then $\alpha + \beta i \not \equiv c+di \pmod{a+bi}$.  

Suppose, leading to a contradiction, that $\alpha + \beta i \not \equiv c+di \pmod{a+bi}$, i.e. that there exists some $z \in \mathbb{Z}[i]$ such that $(\alpha + \beta i) - (c+di) = z(a+bi)$.  Note that 
\begin{align*}
Nm(z)Nm(a+bi) &= (\alpha - c)^2 + (\beta -d)^2 \\
& \leq (a-1)^2 + (a+b-1)^2\\
& < a^2  + b^2 + 3 a^2 \\
& < 4 (a^2 + b^2) = 4 Nm(a+bi).
\end{align*}
This implies that $1 \leq Nm(z) < 4$, so $Nm(z) = 1$ or $2$, as there is no Gaussian integer with norm $3$.  We can list all the Gaussian integers with norm 1 or 2, so we can see that $z \in \{ \pm 1, \pm i, \pm 1 \pm i \}$ 
and thus
\begin{equation*}
z(a+bi) \in C = \{ \pm (a+bi), \pm (b-ai), \pm (a-b + (a+b)i), \pm (a + b + (b-a)i) \}.
\end{equation*}
It is easy to check that $(\alpha + \beta i + C) \cap S = \emptyset$.
\end{proof}
\begin{coro} \label{full_set} If $a > b \geq 0$, if $T=\{ x+iy: 0 \leq x <b, -b \leq y <0\},$ and if 
$S = \{ x+yi: 0 \leq x,y < a\}$, then $S\cup T$ contains exactly one representative for each coset of $a+bi$.
\end{coro}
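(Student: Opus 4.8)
The plan is to combine the injectivity already established in Lemma \ref{two_squares} with a straightforward counting argument. Lemma \ref{two_squares} tells us that the natural reduction map $S \cup T \to \mathbb{Z}[i]/(a+bi)$ is injective, so $S \cup T$ contains \emph{at most} one representative of each coset. To upgrade ``at most one'' to ``exactly one,'' I would show that $S \cup T$ has precisely as many elements as there are cosets, so that an injection between finite sets of equal size is forced to be a bijection.

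First I would count $|S \cup T|$. The set $S = \{x+yi : 0 \leq x, y < a\}$ is an $a \times a$ block of lattice points, hence $|S| = a^2$, and $T = \{x+iy : 0 \leq x < b, -b \leq y < 0\}$ is a $b \times b$ block, hence $|T| = b^2$. These are disjoint, since every point of $S$ has imaginary part $\geq 0$ while every point of $T$ has imaginary part $< 0$; therefore $|S \cup T| = a^2 + b^2$.

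Next I would invoke the standard fact that for a nonzero $\alpha \in \mathbb{Z}[i]$ the quotient ring $\mathbb{Z}[i]/(\alpha)$ has exactly $\mathrm{Nm}(\alpha)$ elements, so the number of cosets of $a+bi$ is $\mathrm{Nm}(a+bi) = a^2 + b^2$. Here $a > b \geq 0$ guarantees $a \geq 1$, so $a + bi \neq 0$ and the fact applies. Comparing the two counts, $|S \cup T| = a^2 + b^2 = \mathrm{Nm}(a+bi)$ equals the number of cosets.

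Finally, the injective reduction map from Lemma \ref{two_squares} between two finite sets of the same cardinality $a^2 + b^2$ must be surjective as well, so $S \cup T$ hits every coset exactly once. I do not expect a serious obstacle in this argument: all the delicate geometry was handled in Lemmas \ref{a_square} and \ref{two_squares}, and what remains is the cardinality bookkeeping together with the index formula $[\mathbb{Z}[i] : (a+bi)] = \mathrm{Nm}(a+bi)$. The only point worth stating carefully is the disjointness of $S$ and $T$, which is immediate from the sign of the imaginary parts.
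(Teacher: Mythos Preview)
Your proposal is correct and matches the paper's own proof essentially line for line: invoke Lemma~\ref{two_squares} for injectivity, count $|S \cup T| = a^2 + b^2 = \Nm(a+bi)$, and conclude by the pigeonhole principle. The only difference is that you spell out the disjointness of $S$ and $T$ and the nonvanishing of $a+bi$ explicitly, which the paper leaves implicit.
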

\begin{proof} Lemma \ref{two_squares} already showed that each element in $S \cup T$ belongs to a distinct coset of $a+bi$.  There are $a^2 + b^2 = Nm(a+bi)$ elements in $S\cup T$, so it contains a representative from each coset.
\end{proof}

\begin{lemma}\label{triangle} Let $a > b \geq 0$, $2 \nmid (a,b)$, and let 
\begin{equation*}
\mathscr{S} = \{ x+yi: 0 \leq x <a, 0 \leq y < a-x\}.
\end{equation*}  If 
$\mathscr{S} \subset \bigcup_{z \in \Z[i]} (B_n + (a+bi)z)$, then $B_n \twoheadrightarrow \mathbb{Z}[i]/(a+bi)$.

\begin{proof} For ease of exposition, let $U = \bigcup_{z \in \Z[i]} (B_n + (a+bi)z)$ and note that $U$ is closed under multiplication by units.  If 
\begin{align*}
S &= \{x+yi: 0 \leq x,y < a\} \subset U,\\
\intertext{then}
T&= \{ x+yi: 0 \leq x < b, -b \leq y <0\} \subset U,
\end{align*}
as $T \subset -iS \subset -iU \subset U$.  
Corollary \ref{full_set} states that $S \cup T$ has a representative of every class in $\mathbb{Z}[i]/(a+bi)$, so if $S \cup T \subset U$, then 
$B_n \twoheadrightarrow \mathbb{Z}[i]/(a+bi)$.  To prove our lemma, then, it suffices to show that if $\mathscr{S} \subset U$, then $S \subset U$.

Note that if $\mathscr{S} \subset U$, then $(-\mathscr{S} + a + bi), (i \mathscr{S} + a + bi) \subset U$.  The union 
$\mathscr{S} \cup ( -\mathscr{S} + a + bi) \cup (i \mathscr{S} + a + bi)$ contains the set 
$
\{x+yi : 0 \leq x < a, 0 \leq y < x+b\}.
$
The set $U$ also contains $i(-\mathscr{S} + a +bi)$, which itself contains the set \\
$\{x+yi: 0 \leq x < a-b, x+b < y \leq a\}$, so $U$ contains 
\begin{equation*} \label{p_1}
\{x+yi: 0 \leq x,y < a, y \neq x+b \} \text{ or } S \setminus \{x+yi: y = x+b \}.
\end{equation*}

The set $-\mathscr{S} + (1+i)(a+bi)$ contains $\{x+yi: 0 \leq x \leq a-b, a-x < y <a\}$, so if we take the union with $\mathscr{S}$, we see that 
$U$ contains 
\begin{equation*}\label{p-2}
\{x+yi: 0 \leq x,y < a, y \neq a-x \} \text{ or } S \setminus \{x+yi: y = a-x \}.
\end{equation*}

These two expressions show that 
$(S \setminus \{ x+yi: y = a-x = x+b\}) \subset U$.  If $a-x = x+b$, then $2x = a-b$, and $x$ cannot be integral, as $2 \nmid (a,b)$.  We conclude that $S \subset U$.

\end{proof}

\end{lemma}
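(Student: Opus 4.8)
The plan is to reduce the problem in two stages. First, by Corollary \ref{full_set}, the set $S \cup T$ contains a full set of coset representatives for $\mathbb{Z}[i]/(a+bi)$. Since $U := \bigcup_{z \in \Z[i]} (B_n + (a+bi)z)$ is closed under multiplication by units (because $B_n$ is) and $T \subset -iS$, it suffices to prove that $S \subset U$; the containment $T \subset U$ will then follow for free. So the entire burden is to upgrade the hypothesis $\mathscr{S} \subset U$ to the stronger conclusion $S \subset U$, where $\mathscr{S}$ is the lower-left triangle and $S$ is the full $a \times a$ square.

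The second stage is to cover the square $S$ by finitely many isometric copies of the triangle $\mathscr{S}$, each obtained from $\mathscr{S}$ by a unit-rotation together with a translation by an element of the lattice $(a+bi)\Z[i]$. The point is that every such copy lies inside $U$: rotation by a unit keeps us in $U$ because $U$ is unit-closed, and translating by a multiple of $a+bi$ keeps us in $U$ by the very definition of $U$ as a union over the lattice. First I would write down explicitly which rigid motions to use, namely $\mathscr{S}$ itself, $-\mathscr{S} + (a+bi)$, $i\mathscr{S} + (a+bi)$, $i(-\mathscr{S} + (a+bi))$, and $-\mathscr{S} + (1+i)(a+bi)$, and verify that each is contained in $U$. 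The triangle $\mathscr{S}$ is a right isosceles triangle with legs along the axes and hypotenuse on the line $x + y = a$; each listed transformation is a reflection/rotation of this triangle repositioned to overlap $S$.

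The main combinatorial step is then a direct verification that the union of these copies exhausts $S$. I would argue that $\mathscr{S} \cup (-\mathscr{S}+(a+bi)) \cup (i\mathscr{S}+(a+bi))$ together with $i(-\mathscr{S}+(a+bi))$ covers all of $S$ except possibly the diagonal line $\{y = x+b\}$, and that a different grouping (using $-\mathscr{S}+(1+i)(a+bi)$) covers all of $S$ except possibly the anti-diagonal $\{y = a-x\}$. Intersecting the two conclusions, the only points of $S$ that could fail to lie in $U$ are those on \emph{both} lines, i.e. where $a - x = x + b$, forcing $2x = a - b$.

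The crux — and the only place the parity hypothesis $2 \nmid (a,b)$ is used — is the final observation that this bad locus is empty. Since $a$ and $b$ have opposite parity, $a - b$ is odd, so $2x = a-b$ has no integer solution $x$, and hence no lattice point of $S$ lies on both lines simultaneously. This shows $S \subset U$, completing the proof. I expect the delicate part to be getting the boundary inequalities in the triangle descriptions exactly right (half-open versus closed edges) so that the two covering arguments genuinely leave out only the single diagonal line each; once the geometry of the overlapping triangles is pinned down, the parity argument finishes things cleanly.
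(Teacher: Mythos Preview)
Your proposal is correct and follows essentially the same route as the paper's proof: the same reduction via Corollary~\ref{full_set} and $T\subset -iS$, the same five translated/rotated copies of $\mathscr{S}$, the same two coverings leaving out the lines $y=x+b$ and $y=a-x$ respectively, and the same parity endgame on $2x=a-b$. Your caveat about half-open boundaries is apt; note in particular that the pairing $\mathscr{S}\cup\bigl(-\mathscr{S}+(1+i)(a+bi)\bigr)$ by itself only handles $0\le x\le a-b$ above the anti-diagonal, so to get all of $S\setminus\{y=a-x\}$ you must (as the paper implicitly does) also invoke the first covering for the strip $x>a-b$, where $y=x+b$ cannot occur inside $S$.
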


 \section{Our Main Result}

We can now look at the question of whether $B_n \twoheadrightarrow \mathbb{Z}[i]/(a+bi)$ geometrically, by looking at sets that cover our set $\mathscr{S}$.  The next lemma demonstrates why we will be interested in translates of $Oct_n$. 
\begin{lemma}\label{oct_translate} If $2 \nmid(a,b)$ , if $u \in \mathbb{Z}[i]^{\times}$, if $x+yi \in (Oct_n + u(a+bi))$, and if $2|(x,y)$, then $x+yi \in (B_n + u(a+bi))$.
\end{lemma}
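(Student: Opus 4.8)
The plan is to reduce everything to the second statement of Corollary \ref{oct}, which says that any element of $Oct_n$ whose two coordinates are not both even already lies in $B_n$. First I would use the hypothesis $x+yi \in (Oct_n + u(a+bi))$ to write $x+yi = \omega + u(a+bi)$ for some $\omega \in Oct_n$. With this decomposition it suffices to prove that $\omega \in B_n$, since the desired conclusion $x+yi \in (B_n + u(a+bi))$ follows at once.

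The heart of the argument is a parity computation for $\omega = (x+yi) - u(a+bi)$. Since $u \in \mathbb{Z}[i]^{\times} = \{\pm 1, \pm i\}$, multiplication by $u$ either preserves the two coordinates of $a+bi$ up to sign (when $u = \pm 1$) or interchanges them up to sign (when $u = \pm i$, using $i(a+bi) = -b+ai$). In every case the real and imaginary parts of $u(a+bi)$ carry the parities of $a$ and $b$ in one order or the other. Because $2 \mid x$ and $2 \mid y$, subtracting $u(a+bi)$ yields an $\omega$ whose two coordinates have exactly those same parities. The hypothesis $2 \nmid (a,b)$ says $a$ and $b$ are not both even, so at least one coordinate of $\omega$ is odd; that is, $2 \nmid (\mathrm{Re}\,\omega, \mathrm{Im}\,\omega)$.

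With this in hand the proof closes immediately: $\omega \in Oct_n$ together with $2 \nmid (\mathrm{Re}\,\omega, \mathrm{Im}\,\omega)$ lets me invoke the second assertion of Corollary \ref{oct} to conclude $\omega \in B_n$, whence $x+yi = \omega + u(a+bi) \in (B_n + u(a+bi))$, as required.

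I do not expect a genuine obstacle here; the only step demanding care is the bookkeeping across the four units, in particular noting that the $u = \pm i$ cases swap the roles of $a$ and $b$ yet still leave at least one coordinate of $\omega$ odd. The conceptual point, and the reason the parity hypotheses are exactly the right ones, is that Corollary \ref{oct} converts geometric containment in $Oct_n$ into membership in $B_n$ precisely when the coordinates are not both even, while translation by $u(a+bi)$ with $2 \nmid (a,b)$ is exactly what forces that parity onto $\omega$ once $x$ and $y$ are even.
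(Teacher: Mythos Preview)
Your proposal is correct and follows essentially the same route as the paper: write $x+yi - u(a+bi)$ as an element of $Oct_n$, use the parity hypotheses on $(x,y)$ and $(a,b)$ to see that this element has coordinates not both even, and then invoke Corollary~\ref{oct} to land in $B_n$. The paper's proof is terser (it does not spell out the four cases for $u$), but the idea is identical.
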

\begin{proof}  If $x+yi \in (Oct_n + u(a+bi))$, then there exists some $c +di \in Oct_n$ such that $(x+yi) - u(a+bi) = c+di$.  As $2\nmid (a,b)$ and $2|(x,y)$, we know that $2 \nmid (c,d)$ and thus $c+di \in B_n$ by Corollary \ref{oct}.
\end{proof}
 

The aim of this paper is to prove that $A_{\mathbb{Z}[i], n} = B_n$.  As we saw in section \ref{A_sets}, we must still show that if $(a+bi) \in B_{n+1}\setminus B_n$, $(1+i)\nmid (a+bi)$, then $B_n \twoheadrightarrow \mathbb{Z}[i]/(a+bi)$.  We will prove that geometrically, using Lemmas \ref{triangle} and \ref{oct_translate}.  We will break our problem up into cases, and prove them in the following results.  

\begin{lemma} \label{small} If $a+bi \in B_{n+1} \setminus B_n$, if $(1+i) \nmid a+bi$, and if $|a|,|b| \leq w_n -2$, then $B_n \twoheadrightarrow \mathbb{Z}[i]/(a+bi)$.
\end{lemma}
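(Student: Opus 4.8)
The hypothesis $|a|, |b| \leq w_n - 2$ together with $a+bi \in B_{n+1}$ (via Corollary \ref{oct}) places $a+bi$ inside the octagon $Oct_n$, which is exactly the regime where translates of $Oct_n$ behave well. By Lemma \ref{triangle}, it suffices to show that the triangle $\mathscr{S} = \{x+yi : 0 \leq x < a, \, 0 \leq y < a-x\}$ is contained in $U = \bigcup_{z \in \Z[i]} (B_n + (a+bi)z)$, where I may assume without loss of generality (using closure under units and conjugation) that $a > b \geq 0$. Since $2 \nmid (a,b)$ and $a > b \geq 0$, we have $a > |b|$, so $\mathscr{S}$ is a genuine triangle.

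**Covering the triangle with octagon translates.** The idea is to cover $\mathscr{S}$ not by $B_n$ directly but by the larger, geometrically convex sets $Oct_n + u(a+bi)$ for the four units $u \in \{\pm 1, \pm i\}$ (and $u=0$), then invoke Lemma \ref{oct_translate} to descend from $Oct_n$ to $B_n$ on lattice points with $2 \mid (x,y)$, and handle the points with $2 \nmid (x,y)$ separately. The key geometric claim is that the octagon $Oct_n$ centered at the origin, together with a few of its translates by $\pm(a+bi)$ and $\pm i(a+bi)$, covers all of $\mathscr{S}$. I would establish this by comparing the "radius" of $Oct_n$ in the relevant directions — governed by $w_n - 2$ and $w_{n+1} - 3$ — against the size of $a+bi$, which is constrained by $a+bi \in B_{n+1}$, i.e. by $|a| + |b| \leq w_{n+2} - 3$ and $|a|, |b| \leq w_{n+1} - 2$ (together with the failure $a+bi \notin B_n$, which forces $a+bi$ out of $Oct_{n-1}$ and hence gives a lower bound on its size). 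The sequence identities for $w_n$, especially $w_n - 2^{\lfloor n/2\rfloor + 1} = w_{n-1}$ and $2(w_{n+1} - w_n) \leq w_n$, are exactly the inequalities I expect to need to verify that consecutive octagon translates overlap enough to leave no gaps in $\mathscr{S}$.

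**The descent to $B_n$ and the parity issue.** Once $\mathscr{S} \subset \bigcup_u (Oct_n + u(a+bi))$, any point $x+yi \in \mathscr{S}$ with $2 \mid (x,y)$ lands in some $B_n + u(a+bi) \subset U$ by Lemma \ref{oct_translate}. For points with $2 \nmid (x,y)$, I can use the second clause of Corollary \ref{oct} directly: if such a point lies in $Oct_n$ (after translating by the appropriate $u(a+bi)$, noting that the translated point stays primitive because $a+bi$ is primitive), then it already lies in $B_n$. So in fact both parity classes are handled by Corollary \ref{oct}, and the covering by octagon translates is the whole game.

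**The main obstacle.** I expect the hard part to be the purely geometric covering argument — verifying that the finitely many octagon translates genuinely cover the triangle $\mathscr{S}$ with no uncovered lattice points, particularly near the hypotenuse $y = a - x$ and near the far corner $(a,0)$ where the point is furthest from the origin. This is where the exact constants matter: I will need the bound $a+bi \in B_{n+1}$ (so $|a| + |b| \leq w_{n+2} - 3$) to ensure the translated octagons reach far enough, and the hypothesis $|a|, |b| \leq w_n - 2$ to keep $a+bi$ small enough that the single-step translates suffice rather than needing translates by larger multiples. Matching these against the octagon's defining inequalities $|x|+|y| \leq w_n - 2$ and $|x| + |y| \leq w_{n+1} - 3$ via the listed $w_n$-identities is the crux; the reduction via Lemma \ref{triangle} and the parity bookkeeping via Corollary \ref{oct} are comparatively routine.
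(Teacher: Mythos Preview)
Your framework is right --- reduce to the triangle $\mathscr{S}$ via Lemma~\ref{triangle}, then cover by octagon translates and invoke Corollary~\ref{oct} and Lemma~\ref{oct_translate} --- but you are missing the one observation that makes this lemma easy and distinguishes it from the harder cases handled in Theorem~\ref{main_result}. The hypothesis $|a|,|b|\le w_n-2$ gives $x+y<a\le w_n-2$ for every $x+yi\in\mathscr{S}$, so the \emph{entire} triangle already sits inside $Oct_n$. Consequently every point of $\mathscr{S}$ with $2\nmid(x,y)$ lies in $B_n$ outright by Corollary~\ref{oct}, with no translation at all. For the points with $2\mid(x,y)$, a single translate $Oct_n+(a+bi)$ suffices: its lower-left diagonal edge is $x+y=a+b+3-w_{n+1}$, so it covers all of $\mathscr{S}$ above that line, and Lemma~\ref{oct_translate} puts those even points into $B_n+(a+bi)$. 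The leftover even points satisfy $x+y<a+b+3-w_{n+1}\le w_{n+2}-w_{n+1}\le w_{n-1}$, hence lie in $Oct_{n-1}\subset B_n$ directly. That is the whole proof; you need two octagons, not five.

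Two further remarks. First, your anticipated difficulty ``near the far corner $(a,0)$'' does not exist here: $a-1\le w_n-3$, so that corner is comfortably inside $Oct_n$. Second, your parity bookkeeping contains an error: translating a point with $2\nmid(x,y)$ by $u(a+bi)$ does \emph{not} in general preserve primitivity, since $a$ and $b$ have opposite parities and can flip an (odd,\,even) pair to (even,\,even). In the paper's argument this issue never arises precisely because no translation is needed for the odd-parity points; in your more elaborate plan it would have to be dealt with.
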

\begin{proof}  For ease of notation, let $U = \bigcup_{z \in \mathbb{Z}[i]} (B_n + z(a+bi))$.  Since $(1+i)\nmid (a+bi)$ and since $B_{n+1},B_n$ are both closed under complex conjugation and multiplication by units, we can assume without loss of generality that 
$w_{n} -2 \geq a > b \geq 0$, $2 \nmid (a,b)$.  

Let us define $\mathscr{S} = \{ x+yi: 0 \leq x, y; x+y <a\}$, and let us note that 
\begin{equation*}
\mathscr{S} \subset \{ x+yi : 0 \leq x,y; x+y < w_n -2\} \subset Oct_n.
\end{equation*} 
Corollary \ref{oct} then tells us that if $x+yi \in \mathscr{S}, 2 \nmid (x,y)$, then $x+yi \in B_n \subset U$.


As $b -(w_n-2) < a - (w_n -2) \leq 0$, the lower left edge of $Oct_n + a+bi$ intersects $\mathscr{S}$ at $y = (3+a+b-w_{n+1}) -x$.  If $c+di \in 
\mathscr{S}$, if $2 |(c,d)$, and if $c+d \geq 3 + a_b - w_{n+1}$, then $c+di \in (Oct_n + a+bi)$ and $c+di \in (B_n + a+bi) \subset U$ by Lemma 
\ref{oct_translate}.  Now suppose that $c+di \in \mathscr{S}$, with $2 |(c,d)$ and $c+d < (3 + a+b-w_{n+1}).$  We know that 
\begin{equation*}
3 + a+b -w_{n+1} \leq 3 + w_{n+2} - 3 - w_{n+1} \leq w_{n+2} - w_{n+1} = 2(w_n - w_{n-1}) \leq w_{n-1},
\end{equation*}
so by Corollary \ref{oct}, $c+di \in B_n \subset U$.  As $\mathscr{S} \subset U$, we apply Lemma \ref{triangle} and conclude that 
$B_n \twoheadrightarrow \mathbb{Z}[i]/(a+bi)$.
\end{proof}

We can now prove our main result. 
\begin{theorem} \label{main_result} For $n \geq 0$, $A_{\mathbb{Z}[i],n} = B_n$.
\end{theorem}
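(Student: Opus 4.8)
The plan is to induct on $n$. The base case $n = 0$ is immediate, since $B_0 = \{0, \pm 1, \pm i\} = A_{\mathbb{Z}[i],0}$. For the inductive step I assume $A_{\mathbb{Z}[i],j} = B_j$ for all $j \leq n$ and prove $A_{\mathbb{Z}[i],n+1} = B_{n+1}$. One inclusion, $A_{\mathbb{Z}[i],n+1} \subseteq B_{n+1}$, is exactly Lemma \ref{containment}. For the reverse inclusion, $B_n = A_{\mathbb{Z}[i],n} \subseteq A_{\mathbb{Z}[i],n+1}$ already, so it remains to place each $a+bi \in B_{n+1} \setminus B_n$ into $A_{\mathbb{Z}[i],n+1}$, and I split this according to whether $(1+i)$ divides $a + bi$.

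If $(1+i) \mid a+bi$, I claim $a+bi = (1+i)\gamma$ for some $\gamma \in B_n$. Writing $a+bi = \sum_{j=0}^{n+1} v_j (1+i)^j$ with each $v_j \in \{0, \pm 1, \pm i\}$ and reducing modulo $(1+i)$ gives $a+bi \equiv v_0$; divisibility then forces $v_0 \equiv 0$, and since $0$ is the only element of $\{0, \pm 1, \pm i\}$ divisible by $(1+i)$, we get $v_0 = 0$ and hence $\gamma = \sum_{j=0}^{n} v_{j+1}(1+i)^j \in B_n$. Lemma \ref{multiply_by_1+i} now yields $a+bi = (1+i)\gamma \in (1+i)B_n \subseteq A_{\mathbb{Z}[i],n+1}$.

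If $(1+i) \nmid a+bi$, then by the defining recursion $A_{\mathbb{Z}[i],n+1} = A_{\mathbb{Z}[i],n} \cup \{\beta : A_{\mathbb{Z}[i],n} \twoheadrightarrow \mathbb{Z}[i]/\beta\}$ together with $A_{\mathbb{Z}[i],n} = B_n$, it suffices to prove $B_n \twoheadrightarrow \mathbb{Z}[i]/(a+bi)$. Using closure under conjugation and units (Corollary \ref{you_get_the_whole_set}) I reduce to $a > b \geq 0$ with $2 \nmid (a,b)$; Theorem \ref{B_union} then bounds $a \leq w_{n+1} - 2$ and $a + b \leq w_{n+2} - 3$, while $a+bi \notin B_n$ forces either $a \leq w_n - 2$ or $a > w_n - 2$. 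The first sub-case is exactly Lemma \ref{small}. The second, ``large'', sub-case is where the real work lies.

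For the large sub-case the strategy mirrors Lemma \ref{small}: with $\mathscr{S} = \{x+yi : 0 \leq x,y;\ x+y < a\}$, I aim to show $\mathscr{S} \subseteq U := \bigcup_{z \in \mathbb{Z}[i]} (B_n + z(a+bi))$ and then invoke Lemma \ref{triangle}. Points of $\mathscr{S}$ with $2 \nmid (x,y)$ that lie in $Oct_n$ land in $B_n$ by Corollary \ref{oct}, while points with $2 \mid (x,y)$ that lie in a unit translate $Oct_n + u(a+bi)$ land in $B_n + u(a+bi)$ by Lemma \ref{oct_translate}; the task is to select finitely many unit translates whose octagons, together with the central $Oct_n$, cover $\mathscr{S}$ with the parities matching the hypotheses of those two results. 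I expect the main obstacle to be exactly this covering near the corners of $\mathscr{S}$ adjacent to $(a,0)$ and $(0,a)$: once $a > w_n - 2$ these corners protrude past the coordinate bounds $|x|,|y| \leq w_n - 2$ of the central octagon and must be absorbed by translates such as $Oct_n + (a+bi)$, forcing one to verify simultaneously the metric inequalities (via the arithmetic of $w_n$, e.g.\ $w_{n+1} - w_n$ and $w_{n+1} - 3\cdot 2^{\lfloor n/2 \rfloor} \leq w_{n-1}$) and that the coordinate parities of the translated points align with whichever of Corollary \ref{oct} or Lemma \ref{oct_translate} is applied. Once $\mathscr{S} \subseteq U$ is established, Lemma \ref{triangle} gives $B_n \twoheadrightarrow \mathbb{Z}[i]/(a+bi)$, completing the large sub-case and with it the induction.
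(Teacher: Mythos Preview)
Your overall architecture matches the paper's exactly: induction, Lemma \ref{containment} for one inclusion, the split on $(1+i)$-divisibility with Lemma \ref{multiply_by_1+i} handling the divisible case, and Lemma \ref{small} for the small sub-case. The gap is that the large sub-case ($a > w_n - 2$) is only sketched, and your sketch omits two ingredients the paper actually needs. First, the paper splits further on whether $b \le w_{n+1}-w_n$ or $b > w_{n+1}-w_n$; the geometry of which translates cover $\mathscr{S}$ is genuinely different in these two ranges, and the inequalities do not go through uniformly. Second, in the case $b > w_{n+1}-w_n$ the paper must use the translate by $(1+i)(a+bi)$, not only unit translates $u(a+bi)$: for certain odd-parity points $x+yi\in\mathscr{S}\setminus Oct_n$ with $b-y$ (or $b+x$) large, neither the central $Oct_n$ nor any $Oct_n + u(a+bi)$ with $u\in\mathbb{Z}[i]^\times$ captures them, and one instead checks directly that $(1+i)(a+bi) - (x+yi) \in B_n$. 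So the plan ``central octagon for odd parity, unit-translated octagons for even parity'' is not sufficient as stated.

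More broadly, the paper does not reduce everything to octagon coverings. In both parity regimes it repeatedly falls back on direct membership in $B_n$ via the explicit description of Theorem \ref{B_union}: one computes $2^k \parallel (\alpha,\beta)$ for the relevant difference $\alpha+\beta i$, bounds $|\alpha|,|\beta|$ against $w_n - 2^{k+1}$ and $|\alpha|+|\beta|$ against $w_{n+1} - 3\cdot 2^k$, and concludes. Corollary \ref{oct} and Lemma \ref{oct_translate} dispose of the bulk of $\mathscr{S}$, but the residual strips near the boundary of each translate are dispatched by these explicit inequality checks, and that case analysis is the main content (and length) of the proof. Your proposal correctly identifies the strategy and the tools; what remains is precisely this execution of the large sub-case.
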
 

\begin{proof}  We will prove the theorem using induction.  We see upon examination that $A_{\mathbb{Z}[i],n} = B_n$ for $n =0,1$.  Suppose that $A_{\mathbb{Z}[i],j} = B_j$ for all $j \leq n$ and that $n \geq 2$.  Lemma \ref{containment} states that $A_{\mathbb{Z}[i],n+1} \subset B_{n+1}$, so to prove the equality, we need to show that $B_{n+1} \subset A_{\mathbb{Z}[i], n+1}$.  

It is clear that $B_n \subset A_{\mathbb{Z}[i], n+1}$ and Lemma \ref{multiply_by_1+i} establishes that $(1+i)B_n \subset A_{\mathbb{Z}[i], n+1}$.  It therefore remains to show that $B_{n+1}\setminus (B_n \cup (1+i)B_n) \subset A_{\mathbb{Z}[i], n+1}$.  We can see, however, that $B_{n+1} \setminus (B_n \cup (1+i)B_n)$ equals the set of $a+bi \in B_{n+1}\setminus B_n$ such that $(1+i) \nmid (a+bi)$.  Lemma \ref{small} already proved that if $a+bi \in B_{n+1}\setminus B_n$ such that  $(1+i) \nmid (a+bi)$ and $|a|, |b| \leq w_n -2$, then $a+bi \in A_{\mathbb{Z}[i], n+1}$.  It remains to show that if $a+bi\in B_{n+1} \setminus B_n$, $(1+i) \nmid (a+bi)$ and $\max (|a|, |b|) > w_n -2$, then $B_n \twoheadrightarrow \mathbb{Z}[i]/(a+bi)$.

 We will prove this using Lemma \ref{triangle}, by showing that 
\begin{equation*} 
\mathscr{S} = \{x+yi: 0 \leq x,y; x+y < a \} \subset U = \bigcup_{z\in \mathbb{Z}[i]} (B_n + z(a+bi)).
\end{equation*} Corollary \ref{oct} states that if $x+yi \in \mathscr{S} \cap Oct_n$, $2 \nmid (x,y)$, then $x+yi \in B_n \subset U$. As $a -1 \leq w_{n+1} - 3$, Corollary \ref{oct} implies that $\{x+yi \in \mathscr{S}: 2\nmid (x,y), \max (x,y) \leq w_n -2\} \subset U$.  We will now break our proof into two cases.

\underline{$\mathbf{b\leq w_{n+1}-w_n:}$ }  We will first look at the pairs $x+yi \in \mathscr{S}$ such that $2 \nmid (x,y)$.  We already know that if such $x+i \in Oct_n$, then they are in $B_n$, so we will study the pairs in $\mathscr{S}\setminus Oct_n$.  Suppose that $x+yi \in \mathscr{S}$, $2 \nmid (x,y)$, and that $y> w_n -2$, so that 
\begin{equation*}
x < a-y < w_{n+1} - 2 - w_n + 2 = w_{n+1} - w_n.
\end{equation*}
If $2^k \parallel (a-y, b+x)$, then 
$x < a-y \leq w_{n+1} - w_n - 2^k \leq w_n -2^{k+1}$ and 
\begin{align*}
b+x &< w_{n+1} - w_n + w_{n+1} - w_n - 2^k = w_n - 2^k, \text{ so}\\
b+x &\leq w_n - 2^{k+1}.
\end{align*}
From this we infer that that 
\begin{align*}
a-y+b+x &\leq w_{n+1} -w_n - 2^k + w_n - 2^{k+1}=w_{n+1} - 3\cdot 2^k, \text{ so}\\
-b+ai - (x+yi) &= -(b+x) + (a-y)i\in B_n, 
\end{align*}
and $x+yi \in U$.  The set $U$ is closed under multiplication by $-i$, so $y -xi \in U$ and $y \in U$.  

Now suppose that $x+yi \in \mathscr{S}$, $2 \nmid (x,y)$, and that $x > w_n -2$.  We can assume that $y>0$ as we just showed that $x \in U$ if 
$2\nmid x$, $x > w_n -2$.  If $2^k \parallel (a-x, b-y)$, then 
\begin{align*}
y &< a-x  \leq w_{n+1} - w_n - 2^k \text{ and}\\
|b-y| &\leq w_{n+1} - w_n - 2^k, \text{ so}\\
a-x + |b-y| &\leq 2(w_{n+1} -w_n) - 2^{k+1}\\
 &\leq w_n - 2^{k+1}\\
&\leq w_{n+1} - 3\cdot 2^k.
\end{align*}  
This implies that $(a-x) + (b-y)i \in B_n$ and $x+yi \in U$, so we conclude that $\{x+yi \in \mathscr{S}, 2 \nmid(x,y) \} \subset U$.

Let us turn our attention to the $x+yi \in \mathscr{S}$, $2^k \parallel (x,y)$ with $k \geq 1$.  Before proceeding, note that 
$a-(w_n -2) \leq w_{n+1} - w_n \leq w_{n-2},$ so $a-(w_n -2)$ and $(a-(w_n -2))i \in B_n$.  Also note that $a - (w_n -2) \leq (w_n -2) - b+1$.  

The set $Oct_n + (a+bi)$ contains the set 
\begin{equation*}
\{x+yi: a-(w_n-2) \leq x < a, 0 \leq y  \leq \min\{a-1-x, w_{n+1} + b -a -3 +x\} \},
\end{equation*}
with the potential exception of the element $a-(w_n -2)$, so
\begin{equation*}
\{x+yi: 2 |(x,y), a-(w_n-2) \leq x < a, 0 \leq y < \min\{a-x, w_{n+1} + b -a -2 +x\} \} 
\end{equation*}
is contained in $U$. 
Doing the same sort of analysis on $Oct_n +i(a+bi)$, we see that 
\begin{equation*}
\{x+yi: 2|(x,y), \max\{a-(w_n-2), a + b + 3 -w_{n+1} + x \} \leq y < a-x, 0 \leq x \leq  -b + w_n -2 \}
\end{equation*}  
is also contained in $U$.  Upon examining these sets, we see that 
\begin{equation*}
(w_{n+1} +b -a - 2 + x) +1 \geq a + b + 3 -w_{n+1} +x
\end{equation*}
and that if $y = w_{n+1} + b -a-2+x$, then $2\nmid (x,y)$.  The union of our two sets contained in $U$ thus covers 
$\{x+yi \in \mathscr{S} : 2 |(x,y)\}$ with the exception of the set 
\begin{equation*}
\mathscr{S}^{\star} = \{ x +yi \in \mathscr{S}: 2|(x,y), x < a-(w_n-2), y < \max \{ a-(w_n -2), a+b + 3 -w_{n-1} + x\} \}.
\end{equation*}
Suppose that $x+yi \in \mathscr{S}^{\star}$ with $2^k \parallel (x,y)$, so that $x \leq w_{n+1} - w_n -2^k$.  If $y < a- (w_n-2)$,
then $x+y \leq 2(w_{n+1} - w_n -2^k) \leq w_n - 2^{k+1} \leq w_{n+1} - 3\cdot 2^k,$ and $x+yi \in B_n \subset U$.  
If 
\begin{align*} 
y &< a+b + 3 - w_{n+1} + x, \text{ then}\\
y &< w_{n+2} -3 + 3 - w_{n+1} + w_{n+1} - w_n - 2^k \text{ and}\\
y &\leq w_n - 2^{k+1}.
\end{align*}
From this, we see that 
$x+y \leq w_{n+1} -w_n - 2^k + w_n - 2^{k+1} = w_{n+1} - 3 \cdot 2^k$, and therefore $x+yi \in B_n \subset U$.  We have now shown that 
$\{x+yi \in \mathscr{S} : 2 |(x,y) \} \subset U$, and we conclude that $\mathscr{S} \subset U$.

\underline{ $\mathbf{b > w_{n+1} - w_n}:$}  We again start with the set $\mathscr{S}\setminus Oct_n$.  Suppose that $x+yi \in \mathscr{S}$, that $2^k \parallel (a-x,b-y)$, and that $x > w_n -2$, so that 
$y < (a-x) \leq w_{n+1} - w_n -2^k$.  If 
\begin{align*} b-y &\leq w_n - 2^{k+1}, \text{ then}\\
a-x+b-y &\leq w_{n+1} - 3 \cdot 2^k
\end{align*}
and $x+ yi \in B_n \subset U$.  
If $b-y > w_n - 2^{k+1}$ so that $b -y \geq w_n - 2^k$ and 
\begin{align*}
|b-a-y| = a-b+y &\leq w_{n+1} - 2 + 2^k - w_n \\
&\leq w_{n+1} - w_n -2 + w_{n+1} - w_n \\ 
&\leq w_n - 2.
\end{align*} 
We already know that $a+b-x \leq w_{n+1} -3+ 1 - w_n = w_n -2$, so 
\begin{align*}
a+b -x + a-b +y &\leq w_{n+2} - 3 + w_{n+1} - w_n - 2^k + 2^k - w_n \\
&\leq w_{n+1} - 3.
\end{align*}
As $x$ and $y$ have different parities, and as $a$ and $b$ have different parities, $2 \nmid (a+b-x, b-a-y)$, implying that $(1+i)(a+bi) - (x+yi) \in B_n$ and $x+yi \in U$.

Now suppose that $x+yi \in \mathscr{S}$, that $2^k \parallel (a-x,-y)$, that $2 \nmid (x,y)$, and that $y > w_n -2$, so that 
$x< a-y\leq w_{n+1} - w_n -2^k$.  If $b+x \leq w_n - 2^{k+1}$, then $a-y+b+x \leq w_{n+1} - 3 \cdot 2^k$, so $-(b+x) + (a-y)i \in B_n$ and $x+yi \in U$.  If $b+x > w_n - 2^{k+1}$, then $b+x \geq w_n -2^k$.  Note that 
\begin{equation*}
b+x < a+b-y \leq w_{n+2} -3 + 1 - w_n = w_n -2, \text{ so}
\end{equation*} 
\begin{align*}
|a-b-x| = a-(b+x) &\leq w_{n+1} - 2 + 2^k - w_n\\
&\leq 2(w_{n+1} - w_n) - 2\\
&\leq w_n -2.
\end{align*}  
We also note that 
\begin{align*}
a-(b+x)  + (a+b -y) &\leq a - y + a + b - w_n -2^k \\
&\leq w_{n+1} - w_n - 2^k + w_{n+2} - 3 -w_n + 2^k\\
&= w_{n+1} -3
\end{align*}
and $2 \nmid (a-b-x, a+b-y) $, so $(a+bi)(1+i) - (x+yi) \in B_n$.  We conclude that  as $x+yi \in U$, the set $\{x+yi \in \mathscr{S}, 2\nmid (x,y) \} \subset U$.   

We again turn our attention to the $x+yi \in \mathscr{S}$ such that $2^k \parallel (x,y), k \geq 1$.  The octogon $Oct_n + a +bi$ intersects $\mathscr{S}$ at the lines $x= (a-(w_n -2))$ and $y = (a+b+3-w_{n+1}) -x$, so 
\begin{equation*}
\{ x+yi: a-(w_n-2) \leq x < a, a+b+3 - w_{n+1} - x \leq y < a-x\} \subset Oct_n + a +bi.
\end{equation*}
Note that our set contains 
$\{ x+yi \in \mathscr{S}, x \geq w_{n+2} - w_{n+1} \}$.  If $0 < x < w_{n+2} - w_{n+1}$, $0\leq y < a+b+3- w_{n+1} -x$, then 
\begin{align*}
0 < x &\leq w_{n+2} - w_{n+1} -2^k \text{ and}\\
y &< w_{n+2} - w_{n+1} -2^k, \text{ so}\\
y &\leq w_{n+2} - w_{n+1} - 2^{k+1} \leq w_n - 2^{k+1}.
\end{align*}
We can use these two inequalities to see that 
\begin{align*}
x+y &\leq w_{n+2} - w_{n+1} - 2^k + w_{n+2} - w_{n+1} - 2^{k+1}\\
&\leq w_{n+1} - 3 \cdot 2^k,
\end{align*}
so $x+yi \in B_n$, and 
\begin{align*}
\{x+yi \in \mathscr{S}: 2 | (x,y), x \geq a - (w_{n-2} -2) \} \subset U.
\end{align*}

The octogon $Oct_n -b +ai$ intersects $\mathscr{S}$ at the lines $x= w_n -2-b$ and $y = (a+b+3 - w_{n+1}) + x$, so 
\begin{equation*}
\{x+yi: 0 \leq x \leq w_n - 2-b, a+b+3- w_{n+1} + x \leq y < a-x\} \subset (Oct_n -b+ai).
\end{equation*}
Recall that $(w_n - 2 -b) \geq (a-(w_n -2)) -1$.  
If 
\begin{align*}
&0 \leq x < a-(w_n -2) &\text{and } &0 \leq y < a+b + 3 - w_{n+1} +x, \text{ then}\\ 
&0 \leq x \leq w_{n+1} - w_n - 2^k &\text{and } &0 \leq y < w_{n+2} - w_{n+1} + w_{n+1} - w_n - 2^k,
\end{align*} implying that 
$0\leq y \leq w_n - 2^{k+1}$.  From this, we observe that 
$x+y \leq w_{n+1} - 3 \cdot 2^k$, so $x+yi \in B_n \subset U$.  In summary, we have now shown that $\{x+yi \in \mathscr{S}, 2|(x,y) \} \subset U$, so $\mathscr{S} \subset U$. QED.

\end{proof}

\section*{Acknowledgements}
I would like to thank my very patient spouse, Loren LaLonde, who has listened to me talk about this problem for the last twelve years, and who has ensured that my LaTeX always compiled.

I greatly appreciate the help from Michael Bridgeland, who not only read Martin Fuch's thesis for me (I don't speak German), but who also asked a question that led to Corollary \ref{oct}.  

Lastly, I would like to thank both H.W. Lenstra, Jr.  and Franz Lemmermeyer for their help with this paper's background research and literature review.  I highly recommend Lemmermeyer's survey, ``The Euclidean Algorithm in Algebraic Number Fields,'' to anyone interested in the subject.

\end{document}